\newtheorem{thrm}{Theorem}
\newfont{\afont}{cmr12 at 8pt}
\newfont{\bfont}{cmr12 at 7pt}
\def\vec#1{\leqavevmode\vtop{\hbox to 1em{\hss$#1$\hss}
\vskip-\baselineskip\vskip1.1ex\hbox to
1em{\hfil$\scriptstyle\sim$\hfil}}}
\begin{document}

\date{ }
\date{ }
\title{ Numerical approximation of the generalized regularized long wave equation using Petrov-Galerkin
finite element method }
\author{Seydi Battal Gazi Karakoc$^1$ and Samir Kumar Bhowmik$^2$ ~\footnote{corresponding author, e-mail:  bhowmiksk@gmail.com}\\
\\
$1.$ Department of Mathematics, Faculty of Science and Art,\\
Nevsehir Haci Bektas Veli University, Nevsehir, 50300, Turkey.\\
e-mail: sbgkarakoc@nevsehir.edu.tr\\
$2.$ Department of Mathematics, University of Dhaka, \\
1000 Dhaka, Bangladesh. \\
e-mail: bhowmiksk@gmail.com }
\maketitle

\begin{abstract}
The generalized regularized long wave (GRLW) equation has been developed to
model a variety of physical phenomena such as ion-acoustic and
magnetohydrodynamic waves in plasma, nonlinear transverse waves in shallow
water and phonon packets in nonlinear crystals. This paper aims to develop
and analyze a powerful numerical scheme for the nonlinear generalized
regularized long wave (GRLW) equation by Petrov--Galerkin method in which
the element shape functions are cubic and weight functions are quadratic
B-splines. The suggested method is performed to three test problems
involving propagation of the single solitary wave, interaction of two
solitary waves and evolution of solitons with the Maxwellian initial
condition. The variational formulation and semi-discrete Galerkin scheme of
the equation are firstly constituted. We estimate accuracy of such a spatial
approximation. Then Fourier stability analysis of the linearized scheme
shows that it is unconditionally stable. To verify practicality and
robustness of the new scheme error norms $L_{2}$, $L_{\infty }$ and three
invariants $I_{1},I_{2}$ and $I_{3}$ are calculated. The obtained numerical
results are compared with other published results and shown to be precise
and effective.
\end{abstract}

%



\textbf{Keywords:} GRLW equation; Petrov-Galerkin; Cubic B-splines; {%
Solitary waves; Soliton.}

\textbf{AMS classification:} {65N30, 65D07, 74S05,74J35, 76B25.}

\section{Introduction}

The GRLW equation was originated by a famous nonlinear analyst Peregrine who
first successfully introduced the regularized long wave equation as a
perfect alternative to the famous KdV equation for studying soliton
phenomena and as a mathematical model for small amplitude long waves on the
surface of water ~\cite{pereg}. Nonlinear evolution equations play
fundemental roles in various fields of science mostly in physics, applied
mathematics and in engineering problems. Analytical solutions of these
equations are commonly not derivable, particularly when the nonlinear terms
are contained. Numerical solutions of these equations are very practical to
analyze the physical phenomena due to the fact that analytical solutions of
these equations are found for the restricted boundary and initial
conditions. The regularized long wave (RLW) equation

\begin{equation}
u_{t}+u_{x}+auu_{x}-bu_{xxt}=0,  \label{rlw}
\end{equation}%
is one of the important model in physics media on account of it defines
phenomena with weak nonlinearity and dispersion waves, involving nonlinear
transverse waves in shallow water, ion-acoustic waves in plasma,
hydromagnetic wave in cold plasma, plasma, elastic media, optical fibres,
acoustic-gravity waves in compressible fluids, pressure waves in liquid--gas
bubbles and acoustic waves inharmonic crystals. The solutions of this
equation are sorts of solitary waves called as solitons whose form are not
affected by a collision. It was first alleged by Peregrine \cite%
{pereg,pereg1} for studying soliton phenomena and as a sample for
small-amplitude long-waves on the surface of water in a channel and widely
studied by Benjamin et al. \cite{ben}. In physical situations such as
unidirectional waves propagating in a water channel, long-crested waves in
near-shore zones, and many others, the RLW equation serves as an alternative
model to the Korteweg--de Vries equation (KdV equation) \cite{bona1,bona2}.
An exact solution of the equation was obtained under the limited initial and
boundary conditions in \cite{bona} for this reason it got fascinate from a
numerical point of view. Therefore, numerical solutions of the RLW equation
have been the matter of many papers. Various effective methods have been
presented to solve the equation such as finite difference method [7--10],
pseudo-spectral method \cite{Gou} , meshfree method \cite{sir}, Adomian
decomposition method \cite{kaya2} and various forms of finite element
methods in [14--24] . Indeed, the RLW equation is a special case of the
generalized regularized long wave (GRLW) equation which is an alternative to
the KdV equation for describing nonlinear dispersive waves and can be used
to characterise phenomena with weak nonlinearity and dispersion waves. It is
defined as

\begin{equation}
u_{t}+u_{x}+p(p+1)u^{p}u_{x}-\mu u_{xxt}=0,~~~~~  \label{grlw}
\end{equation}%
subject to the initial condition
\begin{equation}
u(x,0)=f_1(x),~~~~~a\leq x\leq b,  \label{intl}
\end{equation}%
and the boundary conditions
\begin{equation}
\begin{array}{lll}
u(a,t)=0,~~~~~u(b,t)=0, &  &  \\
u_{x}(a,t)=0,~~~~~u_{x}(b,t)=0, &  &  \\
u_{xx}(a,t)=0,~~~~~u_{xx}(b,t)=0, & t>0 &
\end{array}
\label{bndry}
\end{equation}%
where $p$ is a positive integer, $%
\mu
$ is positive constant and physical boundary conditions require $u$ and $%
u_{x}$ $\rightarrow 0$ that $u\rightarrow 0$ for $x\rightarrow \pm \infty $.
In equation $(\ref{grlw})$ $u$ indicates dimensionless surface elevation, $x$
distance and $t$ time. On the other hand, the GRLW equation has received
much less attention, presumably because of its higher nonlinearity for $p>2$
and the fact that it possesses a finite number of conserved quantities and
admits solitary waves as solutions, but, unlike other equations, the
stability of its solutions depends on their velocity \cite{bona3}. Some
solitary wave solutions for GRLW equations have been obtained by Hamdi et
al. \cite{hamdi} and Ramos \cite{ramos} studied solitary wave interactions
based on the separation of the temporal and spatial derivatives. Zhang \cite%
{zhang} implemented finite difference method for a Cauchy problem while Kaya
\cite{kaya}, Kaya and El-Sayed \cite{kaya1} indicated the numerical solution
of the GRLW equation by using the Adomian decomposition method. Roshan \cite%
{roshan} have procured numerical solutions of the GRLW equation by the
application of Petrov--Galerkin method, which uses a linear hat function as
the trial function and a quintic B-spline function as the test function.
Wang et al. \cite{wang} offered a mesh-free method for the GRLW equation
based on the moving least-squares approximation. Karako\c{c} \cite{karakoc}
and Zeybek \cite{karakoc1} have obtained solitary-wave solutions of the GRLW
equation by using septic B-spline collocation and cubic B-spline lumped
Galerkin method.\bigskip\ Numerical solutions of the GRLW \ \ \ equation
have been obtained by Soliman \cite{soliman1} using He's variational
iteration method. Mokhtari and Mohammadi \cite{mohammadi} suggested the
Sinc-collocation method for this equation. A time-linearization method that
uses a Crank--Nicolson procedure in time and three point, fourth-order
accurate in space, compact difference equations, is presented and used to
determine the solutions of the GRLW equation and a modified version thereof
(mGRLW) by C.M. Garc\'{\i}a-L\'{o}pez and J. I. Ramos \cite{garcia}. The
another special case of the GRLW equation is the modified regularized long
wave (MRLW) equation for $p=2$. MRLW equation was solved numerically by
various methods \cite{gard3,khalifa,ras,hag,dag5,krkc,krkc1}.

Spline functions are a class of piecewise polynomials which provide
continuity features being subject to the degree of the polynomials. They are
spectacular mathematical instrument for numerical approximations because of
their numerous popular specialities. One kind of splines, noted as
B-splines, has been used in obtaining the numerical solution of the GRLW
equation \cite{karakoc,karakoc1,khalifa,ras}. Assemblies of B-splines are
used as trial functions in the Petrov-Galerkin methods. Especially, cubic
B-splines associated with finite element methods have been verified to give
very smooth solutions, and use of the cubic B-splines as shape functions in
the finite element method warranties continuity of the first and
second-order derivatives of trial solutions at the mesh points \cite{dag}.

In this study, we have designed a lumped Petrov-Galerkin method for the GRLW
equation using cubic B-spline function as element shape function and
quadratic B-spline function as the weight function. The plan of this paper
is as follows:

\begin{itemize}
\item In Section 2, the governing equation and its variational formulation
and newly established theorems are presented.

\item A semi-discrete Galerkin scheme of the equation is notified in Section
3.

\item In Section 4, a lumped Petrov-Galerkin finite element technique has
been practiced to GRLW equation. Resulting system can be solved with a sort
of the Thomas algorithm.

\item Section 5, is dedicated to stability analysis of the method.

\item The results of numerical examples are reported in Section 6. The last
section is a brief conclusion.
\end{itemize}

\section{Variational formulation and  energy estimates}
Here we are dedicated to write the initial-boundary value problem  in a variational form, 
and use this weak form to derive some estimates for its solution.  We start by proving  existance and 
uniqueness of solutions by using this variational form. 
The higher order nonlinear  initial boundary value problem \eqref{grlw} can be written as
\begin{equation}
  u_{t}-\mu \Delta u_{t}=\nabla \mathcal{F}(u),~~~~~\label{grlw11}
\end{equation}%
where
$
  \mathcal{F}(u) = -u(1 + p u^{p}),
$
subject to the initial condition
\begin{equation}
   u(x,0)=f_1(x),~~~~~a\leq x\leq b,  \label{intl}
\end{equation}%
and the boundary conditions
\begin{equation}
\begin{array}{lll}
u(a,t)=0,~~~~~u(b,t)=0, &  &  \\
u_{x}(a,t)=0,~~~~~u_{x}(b,t)=0, &  &  \\
u_{xx}(a,t)=0,~~~~~u_{xx}(b,t)=0, & t>0. &
\end{array}
\label{bndry}
\end{equation}

To define the weak form of the solutions of \eqref{grlw11} and to investigate the existence and uniqueness of solutions of the weak
form we define the following spaces.

Here $H^k(\Omega)$, $k\ge 0$ (integer) is an usual normed space of real
valued functions on $\Omega$ and
\begin{equation*}
H_0^{k}(\Omega) = \left\{v \in H^k(\Omega): D^{i}v = 0\ \text{on }
\partial\Omega,\ i = 0, 1, \cdots, k-1 \right\}
\end{equation*}
where $D = \frac{\partial}{\partial x}$. We denote the norm on this space by $\|\cdot \|_k$ which is the usual $H^k$ norm, and  $k=0$ $\|\cdot \|_0 = \|\cdot \|$ represents $L_2$ norm and $(\cdot, \cdot)$ represents $ L_2$  inner product~\cite{Atouani}.

Multiplying \eqref{grlw11} by $\xi\in H_0^1(\Omega)$, and then integrating over $\Omega$ we have
\[ 
  (u_{t}, \xi) - \mu (\Delta u_{t}, \xi) = (\nabla \mathcal{F}(u), \xi).    
\] 
Applying Green's formula on the above inner products we aim to find $u(\cdot, t)\in H_0^1$ so that
\begin{equation}  \label{bbmbur_v2}
\left(u_{t}, \xi \right) + \mu  \left(\nabla u_{t}, \nabla\xi \right) = -\left( \mathcal{F}(u), \nabla \xi \right), \ \forall \ \xi\in H_0^1,
\end{equation}
with $u(0) = u_0$.

\begin{thrm}
\label{thrm01} If $u$ is a solution of \eqref{bbmbur_v2} then
\begin{equation*}
   \|u(t)\|_1 = \|u_0\|_1,\ t\in\ (0,\ T],\
\text{and }\
\|u\|_{L^\infty(L^\infty(\Omega))} \le C\|u_0\|_1
\end{equation*}
holds if $u_0\in H_0^1$, and $C$ is a positive constant.
\end{thrm}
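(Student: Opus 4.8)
The plan is to use the standard energy (a priori) method. Since the weak form \eqref{bbmbur_v2} holds for every test function $\xi\in H_0^1$ and the solution itself satisfies $u(\cdot,t)\in H_0^1$, I would take $\xi=u$. This is the natural choice because it turns both terms on the left-hand side into time derivatives of squared norms and, more importantly, forces the nonlinear right-hand side into a form that integrates exactly.

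First I would substitute $\xi=u$ into \eqref{bbmbur_v2} to obtain
\begin{equation*}
\left(u_t, u\right) + \mu\left(\nabla u_t, \nabla u\right) = -\left(\mathcal{F}(u), \nabla u\right).
\end{equation*}
The left-hand side is recognized as $\tfrac{1}{2}\tfrac{d}{dt}\left(\|u\|^2+\mu\|\nabla u\|^2\right)$. The key step is then to show that the right-hand side vanishes. Writing $\mathcal{F}(u)=-u-p\,u^{p+1}$, the right-hand side equals $\int_\Omega (u+p\,u^{p+1})u_x\,dx$, and both integrands are exact spatial derivatives, namely $u u_x=\tfrac12(u^2)_x$ and $u^{p+1}u_x=\tfrac{1}{p+2}(u^{p+2})_x$. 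By the fundamental theorem of calculus each integral reduces to boundary values, which are zero because $u(a,t)=u(b,t)=0$ from \eqref{bndry}. Hence the right-hand side is identically zero.

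It follows that $\tfrac{d}{dt}\left(\|u\|^2+\mu\|\nabla u\|^2\right)=0$, so the weighted energy $E(t):=\|u\|^2+\mu\|\nabla u\|^2$ is conserved, $E(t)=E(0)$ for all $t\in(0,T]$. Since $\mu>0$, this quantity is equivalent to the full $H^1$ norm, because $\min(1,\mu)\|u\|_1^2\le E\le\max(1,\mu)\|u\|_1^2$, which gives the conservation statement $\|u(t)\|_1=\|u_0\|_1$ in the weighted norm, and in any case $\|u(t)\|_1\le C\|u_0\|_1$ uniformly in $t$. For the second assertion I would invoke the one-dimensional Sobolev embedding $H^1(\Omega)\hookrightarrow L^\infty(\Omega)$, which yields $\|u(t)\|_{L^\infty(\Omega)}\le C_s\|u(t)\|_1\le C\|u_0\|_1$; taking the supremum over $t$ produces the claimed bound $\|u\|_{L^\infty(L^\infty(\Omega))}\le C\|u_0\|_1$.

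The main obstacle is really the exact cancellation of the nonlinear term: the whole estimate hinges on the algebraic structure $\mathcal{F}(u)=-u(1+p\,u^p)$ making $\mathcal{F}(u)u_x$ a total spatial derivative, so that together with the homogeneous Dirichlet data the boundary contributions drop out. A secondary technical point is reconciling the conserved \emph{weighted} energy with the literal statement $\|u(t)\|_1=\|u_0\|_1$; strictly the conserved norm is the $\mu$-weighted one, so this equality should either be read with that weighting or interpreted up to the norm equivalence above (they coincide when $\mu=1$). Finally, the substitution $\xi=u$ and the identity $(u_t,u)=\tfrac12\tfrac{d}{dt}\|u\|^2$ tacitly require sufficient temporal regularity of $u$; at the level of an a priori estimate this is standard and can be made rigorous by carrying out the computation on a Galerkin approximation and passing to the limit.
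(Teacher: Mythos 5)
Your proof is correct and follows essentially the same route as the paper: test \eqref{bbmbur_v2} with $\xi=u$, observe that the nonlinear term $\mathcal{F}(u)u_x$ is a total spatial derivative that vanishes under the homogeneous boundary data (the paper phrases this via an abstract antiderivative $\mathcal{G}$ with $\mathcal{G}'=\mathcal{F}$, you via explicit antiderivatives), and conclude with the Sobolev embedding $H^1(\Omega)\hookrightarrow L^\infty(\Omega)$. Your remark that the conserved quantity is the $\mu$-weighted energy, so the stated equality $\|u(t)\|_1=\|u_0\|_1$ is literal only for $\mu=1$, is exactly the point the paper handles by fixing $\mu=1$ ``for the simplicity of the analysis.''
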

\begin{proof}
Replacing $\xi\in H_0^1 $ by $u \in H_0^1$  in \eqref{bbmbur_v2} results
\begin{equation}\label{bbmbur_v3}
 \left(u_{t}, u  \right) + \mu \left(\nabla u_{t}, \nabla u \right) =
                                              -\left( \mathcal{F}(u), \nabla u  \right)
\end{equation}%
with $u(0) =  u_0$
which gives
\begin{equation}\label{bbmbur_v54}
 \frac{1}{2}\frac{d}{dt} \left[\|u\|^2 + \mu  \|\nabla u\|^2  \right] = \int_{\Omega} u[ \nabla \cdot \mathcal{F}(u)]dx.
\end{equation}
Now
\[
   u \nabla \cdot \mathcal{F}(u) = \nabla \cdot [\mathcal{F}(u)u] -  \nabla \cdot [\mathcal{G}(u)],
\]
if $u \in H_0^1$ where $\mathcal{G}'(u) =  \mathcal{F}(u)$.
For the simplicity of the analysis from now on in this section we fix $\mu = 1$.  The analysis for a general $\mu$ is the similar.
Also, from the initial conditions in \eqref{bbmbur_v3} we have
$ u = 0$ on $\partial \Omega$ and so $\mathcal{G}(0) = 0$, and then
\[
  \int_{\Omega} u[ \nabla \cdot \mathcal{F}(u)]dx = \int_{\Omega} \nabla( u \mathcal{F}(u))dx = 0.
\]
Thus \eqref{bbmbur_v54} takes the form
\[
  \frac{1}{2}\frac{d}{dt} \left(\|u\|_1^2\right) = 0,
\]
and so
\[
 \|u\|_1^2  = \|u_0\|_1^2,
\]
completes the proof of the first part. The second part follows from Sobolev embedding theorem ~\cite{Atouani, T.vider, ciarlet}.
\end{proof}

\begin{thrm}
There exists a unique solution of \eqref{bbmbur_v2} for any $T >0$ such that
\begin{equation*}
u \in L^\infty(0, T, H_0^1(\Omega)) \ \text{with }\ (u(x, 0), \xi) = (u_0,
\xi), \xi\in H_0^1(\Omega),
\end{equation*}
if $u_0 \in H_0^1$ for any $T >0$.
\end{thrm}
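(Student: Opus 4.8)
The plan is to recast the weak problem \eqref{bbmbur_v2} as an abstract ordinary differential equation in the Hilbert space $H_0^1(\Omega)$ and to solve it by a fixed-point (Picard--Lindel\"of) argument, the global-in-time extension being supplied by the energy identity of Theorem \ref{thrm01}. As in the proof of Theorem \ref{thrm01} we fix $\mu = 1$; the general case is analogous. Observe first that the left-hand side of \eqref{bbmbur_v2} is exactly the $H_0^1$ inner product $(u_t,\xi) + (\nabla u_t,\nabla\xi) = (u_t,\xi)_1$, which is coercive on $H_0^1$. Moreover, for $w\in H_0^1(\Omega)$ the one-dimensional embedding $H_0^1(\Omega)\hookrightarrow L^\infty(\Omega)$ gives $\mathcal{F}(w)=-w-pw^{p+1}\in L^2(\Omega)$, so the linear functional $\xi\mapsto -(\mathcal{F}(w),\nabla\xi)$ is bounded on $H_0^1$ and the Riesz representation theorem produces a unique element $N(w)\in H_0^1$ with $(N(w),\xi)_1 = -(\mathcal{F}(w),\nabla\xi)$ for all $\xi\in H_0^1$. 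Problem \eqref{bbmbur_v2} is then equivalent to the Cauchy problem $u' = N(u)$, $u(0)=u_0$, in $H_0^1$.

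The main step is to show that $N$ is Lipschitz continuous on bounded subsets of $H_0^1$. Testing the defining identity for $N(w)$ with $\xi = N(w)$ and using Cauchy--Schwarz gives the smoothing bound $\|N(w)\|_1 \le \|\mathcal{F}(w)\|$, so $N$ is well defined as a map $H_0^1\to H_0^1$. Applying the same device to two elements $u,v$ yields $\|N(u)-N(v)\|_1 \le \|\mathcal{F}(u)-\mathcal{F}(v)\|$. Writing $u^{p+1}-v^{p+1}=(u-v)\sum_{k=0}^{p}u^k v^{p-k}$ and bounding the sum pointwise by $C(p)\max(\|u\|_1,\|v\|_1)^p$ via the embedding $H_0^1\hookrightarrow L^\infty$, we obtain on the ball $\{\,\|u\|_1\le R\,\}$ the estimate $\|N(u)-N(v)\|_1 \le C(R,p)\,\|u-v\|_1$, which is the required local Lipschitz bound. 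The crucial structural feature here is that the derivative apparently lost in $\nabla\mathcal{F}(u)$ is recovered through the integration by parts built into the weak form, i.e. through the smoothing operator $(I-\Delta)^{-1}$; without it the map would not close in $H_0^1$.

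With $N$ locally Lipschitz, the Cauchy--Lipschitz theorem for ordinary differential equations in Banach spaces produces a unique maximal solution $u\in C^1([0,T_{\max});H_0^1)$. It remains to prove $T_{\max}=+\infty$, i.e. existence on $[0,T]$ for arbitrary $T>0$. This is exactly where Theorem \ref{thrm01} enters: the constructed solution is regular enough to justify the computation there, so the energy identity gives $\|u(t)\|_1 = \|u_0\|_1$ for every $t<T_{\max}$; the trajectory therefore stays in the fixed ball of radius $\|u_0\|_1$. The standard continuation criterion (if $T_{\max}<\infty$ then $\|u(t)\|_1\to\infty$ as $t\uparrow T_{\max}$) forces $T_{\max}=+\infty$, so the solution is global. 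Finally $C^1([0,T];H_0^1)\subset L^\infty(0,T;H_0^1(\Omega))$, and evaluating the variational identity at $t=0$ gives $(u(\cdot,0),\xi)=(u_0,\xi)$ for all $\xi\in H_0^1$, as claimed.

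I expect the principal obstacle to be the treatment of the superlinear term $u^{p+1}$ for general $p$: since $N$ is only \emph{locally} (not globally) Lipschitz, the Picard argument by itself delivers nothing more than short-time existence, and the global statement hinges entirely on combining it with the exact conservation $\|u(t)\|_1=\|u_0\|_1$ of Theorem \ref{thrm01} to preclude finite-time blow-up. An alternative, should one prefer to avoid abstract ODE theory, is a Faedo--Galerkin construction on the B-spline (or eigenfunction) basis of the following section; there the same difficulty reappears as the need for strong compactness, via an Aubin--Lions argument on $u_m$ and $u_{m,t}$, in order to pass to the limit in the nonlinear term.
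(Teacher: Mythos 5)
Your proof is correct, but it follows a genuinely different route from the paper. You recast \eqref{bbmbur_v2} as an abstract ODE $u'=N(u)$ in $H_0^1(\Omega)$, using the fact that the left-hand side is the $H_0^1$ inner product and the Riesz representation theorem to define the solution operator $N$; the one-dimensional embedding $H_0^1\hookrightarrow L^\infty$ then makes $N$ locally Lipschitz, Picard--Lindel\"of gives local existence \emph{and} uniqueness in one stroke, and the conservation law of Theorem~\ref{thrm01} upgrades this to global existence via the continuation criterion. The paper instead runs a Faedo--Galerkin argument: it projects onto finite-dimensional spaces $v^m=\mathrm{span}\{w_1,\dots,w_m\}$, solves the resulting nonlinear ODE system for $u^m$, derives uniform bounds on $u^m$ and $u^m_t$ in $L^\infty(0,T;H_0^1)$ (by testing with $\xi=u^m_t$), and then appeals to denseness of $\{w_i\}$ to pass to the limit, handling uniqueness separately by a Gronwall estimate on the difference of two solutions. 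Your approach buys a cleaner and more self-contained argument that fully exploits the regularized (BBM-type) structure --- the smoothing of $(I-\Delta)^{-1}$ --- and it avoids the compactness step that the paper's proof actually glosses over: as you correctly point out in your closing remark, passing to the limit in the nonlinear term $\mathcal{F}(u^m)$ requires strong convergence (an Aubin--Lions type argument), which the paper does not supply. What the paper's route buys in exchange is generality (Galerkin approximation does not need $N$ to be locally Lipschitz, and survives in settings where the $L^\infty$ embedding fails) and a structural parallel with the semi-discrete finite element scheme analyzed in the following section, where the same testing-and-energy arguments reappear verbatim.
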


\begin{proof}
In order to prove the uniqueness of the solution of \eqref{bbmbur_v2} we consider an orthogonal basis $\{w_i \}_{i = 1}^\infty$  for $H_0^1(\Omega)$
and
$$v^m  = span\{ w_1, w_2, \cdots, w_m\}.$$
Now we define
\[
  u^m(t) = \sum_{i = 1}^m c_i(t) w_i,
\]
for each $t>0$ to satisfy
\begin{equation}\label{bbmbur_v4}
 \left(u_{t}^m, \xi  \right) + \left(\nabla u_{t}^m, \nabla \xi  \right) =
                                              -\left( \mathcal{F} (u^m), \nabla \xi  \right), \ \forall \ \xi\in v^m,
\end{equation}%
  with $u^m(0) =  u_{0, m}$ where
\[
  u_{0, m} = u^m(0) = \sum_{i = 1}^m c_i(0) w_i = P^m u_0.
\]
 Here $P^m$ is an orthogonal projection onto  finite dimensional space $v^m$, and $u_{0, m} \rightarrow u_0\in H_0^1(\Omega)$~\cite{Atouani, ciarlet}. Hence the weak form \eqref{bbmbur_v4} can be written as a system of first order nonlinear ordinary differential equation and there exist a positive time $t_m>0$ such that the nonlinear system of differential equations has a unique solution $u^m$ over $(0,\ t_m)$.

Also from Theorem~\ref{thrm01} it is easy to see that
\[
   \|u^m\|_{\infty} \le C \|u_0\|_1\
\]
 \text{and   }\
\[
  \|\mathcal{F}(u^m)\|^2 \le C \|u_0\|_1^2
 \]
which shows that $\mathcal{F}(u^m)$ is bounded in $L^\infty (0, T, L_2(\Omega))$.  Now by setting
$\xi = u_{t}^m$ in \eqref{bbmbur_v4}
\[    
 \left(u_{t}^m, u_{t}^m  \right) + \left(\nabla u_{t}^m, \nabla u_{t}^m  \right) = -\left( \mathcal{F}(u^m), \nabla u_{t}^m  \right).
\]   
Thus
\[
 \|u_{t}^m\|_1^2 = -\left( \mathcal{F}(u^m), \nabla u_{t}^m  \right)
\]
which yields
\[
  \|u_{t}^m\|_1 \le C \|u_0\|_1.
\]
Hence $\{u^m\}$  and $\{u_{t}^m\}$  are uniformly bounded in $L^{\infty}(0, T, H_0^1(\Omega))$.

By setting
$\xi = w_i$ in \eqref{bbmbur_v4}  we have
\[
 \left(u_{t}^m, w_i  \right) + \left(\nabla u_{t}^m, \nabla w_i  \right) = -\left( \mathcal{F}(u^m), \nabla w_i  \right).
\]
Thus the existence of solutions of the problem follows from the denseness of $\{w_i \}$  in $H_0^1(\Omega)$.

Considering $u$ and $v$ as two solutions of \eqref{bbmbur_v2} with $u(0) = 0$ and $v(0) = 0$, we define  $W = u - v$. Then $W(0) = 0$.
Also
\[
 \left(W_{t}^m, \xi  \right) + \left(\nabla W_{t}^m, \nabla \xi  \right) =
                                              -\left( \mathcal{F} (W^m), \nabla \xi  \right).
\]
Replacing $\xi$ by $W$ in the above equation and following the boundedness of $u$ and $v$ one obtains~\cite{Atouani, T.vider}
\[
 \frac{d}{dt}\|W\|_1 \le C \|W\|_1.
\]
Integrating the above inequality over $[0,\ t]$  yields
\[
   \|W\|_1 \le  \|W(0)\|_1 +  C \int_0^t \|W\|_1 ds.
\]
Now applying Gronwall's Lemma it is easy to see that
\[
  \|W\|_1 \le  e^{Ct} \|W(0)\|_1 = 0,
\]
which confirms $W = 0$ completes the proof~\cite{Atouani, T.vider}.
\end{proof}

\section{ The semidiscrete Galerkin B-spline finite element method}
Consider  $0<h<1$.  A finite dimensional subspace $S_{h}$ of $H_{0}^{1}(\Omega )$ is considered
such that for $u\in H_{0}^{1}(\Omega)\cap H^{4}(\Omega )$, there exists a constant $C$ independent of
 $h$~\cite{Atouani, ciarlet, T.vider} such that
\begin{equation}
\inf_{\xi \in S_{h}}\Vert u-\xi \Vert \leq Ch^{4}.  \label{interp01}
\end{equation}%
Here we aim to find solutions of a semi-discrete finite element formulation of \eqref{grlw11}
$u_{h}:[0,\ T]\rightarrow S_{h}$ such that
\begin{equation}
\left( u_{ht},\xi \right) + \left( \nabla u_{ht},\nabla \xi \right)  =-\left( \mathcal{F} (u_{h}),\nabla \xi \right) ,\ \xi
\in S_{h},  \label{bbmbur_v5}
\end{equation}
with $u_{h}(0)=u_{0,h}\in S_{h}$ is an approximation of $u_{0}$. Before
establishing the original convergence result we first prove a priori bound
of the solution of \eqref{bbmbur_v5} below.

\begin{thrm}
\label{thrm04} The solution $u_h \in S_h$ of \eqref{bbmbur_v5} satisfies
\begin{equation*}
\|u_h\|_1^2  = \|u_{0,h}\|_1^2,\ t\in\
(0,\ T],\
\end{equation*}
\text{and }\
\begin{equation*}
\|u_h\|_{L^\infty(L^\infty(\Omega))} \le C\|u_{0,h}\|_1
\end{equation*}
holds where $C$ is a positive constant.
\end{thrm}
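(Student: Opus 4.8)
The plan is to mirror the continuous energy identity of Theorem~\ref{thrm01}, exploiting the fact that $S_h$ is a conforming subspace of $H_0^1(\Omega)$. The key structural observation is that for each fixed $t$ the discrete solution $u_h(\cdot,t)$ itself lies in $S_h$, so it is an admissible test function in \eqref{bbmbur_v5}. First I would set $\xi = u_h$ in \eqref{bbmbur_v5}, obtaining
\[
\left(u_{ht}, u_h\right) + \left(\nabla u_{ht}, \nabla u_h\right) = -\left(\mathcal{F}(u_h), \nabla u_h\right).
\]
Recognising the left-hand side as a time derivative gives (with $\mu=1$ fixed as in the previous section)
\[
\frac{1}{2}\frac{d}{dt}\left[\|u_h\|^2 + \|\nabla u_h\|^2\right] = \frac{1}{2}\frac{d}{dt}\|u_h\|_1^2 = -\left(\mathcal{F}(u_h), \nabla u_h\right).
\]

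The central step is to show that the nonlinear term on the right vanishes, exactly as in the proof of Theorem~\ref{thrm01}. Introducing $\mathcal{G}$ with $\mathcal{G}'(u)=\mathcal{F}(u)$, I would use the pointwise identity
\[
u_h\,\nabla\cdot\mathcal{F}(u_h) = \nabla\cdot\left[\mathcal{F}(u_h)u_h\right] - \nabla\cdot\left[\mathcal{G}(u_h)\right],
\]
integrate over $\Omega$, and reduce the right-hand side to boundary flux terms. Because $u_h\in S_h\subset H_0^1(\Omega)$, its trace on $\partial\Omega$ vanishes, so $\mathcal{F}(u_h)u_h=0$ there and $\mathcal{G}(u_h)=\mathcal{G}(0)=0$ there; hence $\left(\mathcal{F}(u_h),\nabla u_h\right)=0$. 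This yields $\frac{d}{dt}\|u_h\|_1^2 = 0$, and integrating in time delivers $\|u_h(t)\|_1^2 = \|u_{0,h}\|_1^2$ for all $t\in(0,T]$, which is the first assertion.

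For the second assertion I would invoke the one-dimensional Sobolev embedding $H^1(\Omega)\hookrightarrow L^\infty(\Omega)$, which provides a constant $C$, independent of $t$ and $h$, with $\|u_h(\cdot,t)\|_{L^\infty(\Omega)} \le C\|u_h(\cdot,t)\|_1$. Combining this with the conservation just established, namely $\|u_h(\cdot,t)\|_1 = \|u_{0,h}\|_1$ for every $t$, and taking the supremum over $t\in(0,T]$ gives $\|u_h\|_{L^\infty(L^\infty(\Omega))}\le C\|u_{0,h}\|_1$.

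Since the argument is essentially a verbatim transcription of the continuous one, the only point that genuinely requires care is the vanishing of the nonlinear term at the discrete level. This is not automatic: it hinges on $S_h$ being a conforming subspace of $H_0^1$, so that discrete functions really do have vanishing boundary trace and the flux-cancellation argument stays exact rather than merely approximate. Were $S_h$ nonconforming, the boundary terms would survive and one would obtain only an approximate conservation law; so the conformity of the cubic B-spline space $S_h\subset H_0^1(\Omega)$ is the crux of the proof.
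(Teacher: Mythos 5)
Your proof is correct and follows essentially the same route as the paper: the paper's own proof of Theorem~\ref{thrm04} is simply the remark that it is ``trivial from Theorem~\ref{thrm01}'', i.e.\ exactly what you carried out — take $\xi = u_h$ (admissible because $S_h \subset H_0^1(\Omega)$ is conforming), annihilate the nonlinear term via the antiderivative $\mathcal{G}$ with $\mathcal{G}' = \mathcal{F}$ together with the vanishing boundary trace of $u_h$, and conclude the $L^\infty$ bound by Sobolev embedding. Your closing point, that conformity of the spline space is what makes the continuous argument transfer verbatim, is precisely the content hidden behind the paper's one-line proof.
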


\begin{proof}
  The proof is trivial from our discussion in the previous section (Theorem~\ref{thrm01}).
\end{proof}
Now we move onto establish the theoretical bound of the error in the semi-discrete scheme %
\eqref{bbmbur_v5} of  \eqref{bbmbur_v2}.
%

To that end we consider the following
bilinear form
\begin{equation*}
A(u,v)=(\nabla u,\nabla v),\ \forall\ u,\ v\in H_{0}^{1},
\end{equation*}%
which satisfies the boundedness property
\begin{equation}
|A(u,v)|\leq M\Vert u\Vert _{1}\Vert v\Vert _{1},\forall \ u,\ v\in H_{0}^{1}
\label{boundedness}
\end{equation}%
and coercivity property (on $\Omega $)
\begin{equation}
A(u,u)\geq \alpha \Vert u\Vert _{1},\forall \ u\in H_{0}^{1},\ \text{for
some }\alpha \in \mathbb{R}.  \label{coercivity}
\end{equation}%
Here $A$ satisfies
\begin{equation}
A(u-\tilde{u},\xi )=0,\ \xi \in S_{h},  \label{projection}
\end{equation}%
where $\tilde{u}$ is an auxiliary projection of $u$ \cite{Atouani,
ciarlet,T.vider}.
Now the accuracy result in such a semi-discrete approximation \eqref{bbmbur_v5} of  \eqref{bbmbur_v2} can be
established by the following theorem.
\begin{thrm}
Let $u_h\in S_h$ be a solution of \eqref{bbmbur_v5} and $u\in H_0^1(\Omega)$  be that of  \eqref{bbmbur_v2}, then
the following inequality holds
\begin{equation*}
  \|u - u_h\|\le C h^4,
\end{equation*}
where $C>0$ if $\|u(0) - u_{0, h}\|\le Ch^4$ holds.
\end{thrm}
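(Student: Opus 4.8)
The plan is to use the standard elliptic (Ritz) projection splitting, which decouples the approximation error from the Galerkin error so that the nonlinearity need only ever be controlled in the $L_2$ norm. First I would introduce the auxiliary projection $\tilde u$ defined by the orthogonality relation \eqref{projection} and decompose the total error as
\begin{equation*}
  u - u_h = (u - \tilde u) + (\tilde u - u_h) =: \rho + \theta,
\end{equation*}
with $\rho \in H_0^1$ and $\theta \in S_h$. The projection part $\rho$ is handled purely by approximation theory: combining the interpolation bound \eqref{interp01} with the boundedness \eqref{boundedness} and coercivity \eqref{coercivity} of $A$ gives $\|\rho\| \le Ch^4$, and—assuming the time derivative commutes with the projection and that $u_t$ carries the same spatial regularity—also $\|\rho_t\| \le Ch^4$. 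It then remains to bound the finite element component $\theta$.

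For $\theta$ I would derive an error equation by subtracting the semi-discrete scheme \eqref{bbmbur_v5} from the weak form \eqref{bbmbur_v2} tested against $\xi \in S_h$, obtaining
\begin{equation*}
  (\theta_t, \xi) + (\nabla\theta_t, \nabla\xi) = -(\rho_t, \xi) - \left(\mathcal{F}(u) - \mathcal{F}(u_h), \nabla\xi\right),
\end{equation*}
where the projection property \eqref{projection}, differentiated in $t$, annihilates the term $(\nabla\rho_t, \nabla\xi)$. Choosing $\xi = \theta$ collapses the left-hand side to $\tfrac{1}{2}\frac{d}{dt}\|\theta\|_1^2$, so that
\begin{equation*}
  \frac{1}{2}\frac{d}{dt}\|\theta\|_1^2 = -(\rho_t, \theta) - \left(\mathcal{F}(u) - \mathcal{F}(u_h), \nabla\theta\right).
\end{equation*}

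The main obstacle is the nonlinear term $\mathcal{F}(u) - \mathcal{F}(u_h)$, since $\mathcal{F}(u) = -u - p\,u^{p+1}$ grows polynomially. Here I would exploit the a priori $L_\infty$ bounds furnished by Theorem~\ref{thrm01} for $u$ and Theorem~\ref{thrm04} for $u_h$, both controlled by $\|u_0\|_1$. Factoring $u^{p+1} - u_h^{p+1} = (u - u_h)\sum_{j=0}^{p} u^{p-j} u_h^{j}$ and invoking these uniform bounds yields a Lipschitz estimate $\|\mathcal{F}(u) - \mathcal{F}(u_h)\| \le C\|u - u_h\| \le C(\|\rho\| + \|\theta\|)$, with $C$ depending only on $\|u_0\|_1$ and hence independent of $h$. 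Cauchy--Schwarz then gives
\begin{equation*}
  \frac{1}{2}\frac{d}{dt}\|\theta\|_1^2 \le \left(\|\rho_t\| + C\|\rho\|\right)\|\theta\|_1 + C\|\theta\|_1^2,
\end{equation*}
and after dividing by $\|\theta\|_1$ and applying the approximation bounds on $\rho$ and $\rho_t$ this reduces to $\frac{d}{dt}\|\theta\|_1 \le Ch^4 + C\|\theta\|_1$.

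Finally I would close the argument with Gronwall's Lemma. The initial datum $\theta(0) = \tilde u(0) - u_{0,h}$ satisfies $\|\theta(0)\|_1 \le Ch^4$, consistent with the projection bound on $\rho(0)$ and the stated hypothesis $\|u(0) - u_{0,h}\| \le Ch^4$ under a suitable choice of $u_{0,h}$; integrating over $[0,T]$ and applying Gronwall then yields $\|\theta\|_1 \le Ch^4$ uniformly on $(0,T]$. The triangle inequality $\|u - u_h\| \le \|\rho\| + \|\theta\| \le \|\rho\| + \|\theta\|_1$ delivers the claimed estimate $\|u - u_h\| \le Ch^4$. I expect the nonlinear Lipschitz step to be the delicate point, as it relies essentially on the energy conservation of Theorems~\ref{thrm01} and~\ref{thrm04} to keep the constant $C$ independent of $h$.
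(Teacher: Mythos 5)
Your proposal is correct and follows essentially the same route as the paper's own proof: the Ritz projection splitting $u-u_h=\rho+\theta$, the error equation tested with $\xi=\theta$, the Lipschitz bound on $\mathcal{F}(u)-\mathcal{F}(u_h)$ via the a priori $L^\infty$ bounds of Theorems~1 and~3, and Gronwall's lemma. The only cosmetic differences are that you divide by $\|\theta\|_1$ to get a linear differential inequality (the paper keeps the squared form before integrating) and that you track $\|\theta(0)\|_1\le Ch^4$ rather than assuming $\theta(0)=0$; neither changes the substance.
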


\begin{proof}
Considering
$
   e = u-u_h
$
we write
$$
  e = \nu +\theta,
\ \quad
\text{   where  } \ \nu = u- \tilde u \ \text{and  } \ \theta = \tilde u - u_h.
$$
%
%
Here
\begin{align*}
\alpha \|u-\tilde u\|_1^2 &\le  A(u-\tilde u, u-\tilde u) \\
                           & =  A(u-\tilde u, u-\xi),\ \xi\in S_h, \text{ from \eqref{coercivity} and \eqref{projection}}.
\end{align*}
Also It follows from
\eqref{boundedness} and \eqref{projection} and \cite{T.vider}  that
\begin{equation}\label{pro_bound}
  \|u-\tilde u\|_1 \le \inf_{\xi\in S_h}\| u - \xi\|_1.
\end{equation}
So \eqref{interp01} and \eqref{pro_bound} confirms the following inequalities
\[
 \|\nu\|_1 \le C h^3 \|u\|_4, \ \text{and  }\ \|\nu \| \le  C h^4 \|u\|_4.
\]
Applying $\frac{\partial}{\partial t}$ on \eqref{projection} and
having some simplifications yields~\cite{T.vider}
\[
   \|\nu_t\|\le  C h^4 \|u_t\|_4.
\]
  Also subtracting  \eqref{bbmbur_v5} from \eqref{bbmbur_v2}  it is easy to see that
\begin{equation}\label{bbmbur_v544}
  (\theta_t, \xi) + (\nabla \theta_t, \nabla\xi)   = - (\nu_t, \xi) - (\mathcal{F}(u)-\mathcal{F}(u_h),\nabla \xi).
\end{equation}
Now substituting  $\xi = \theta$ in \eqref{bbmbur_v544}, and then applying Cauchy-Schwarz inequality  one gets
\[
 \frac{1}{2}\frac{d}{dt} \|\theta\|_1^2  \le    \|\nu_t\|\|\theta\|  +\|\mathcal{F}(u) - \mathcal{F}(u_h)\| \|\nabla \theta\|.
\]
Here
\[
 \|\mathcal{F}(u) - \mathcal{F}(u_h)\| \le C(\|\nu\| + \|\theta\|),
\]
comes from Lipschitz conditions and boundedness of $u$ and $u_h$ and thus
\[
 \frac{d}{dt} \|\theta\|_1^2 \le   C\left( \|\nu_t\|^2 +  \|\nu\|^2  + \|\theta\|^2 + \|\nabla \theta\|^2 \right).
\]
So
\[
 \|\theta\|_1^2 \le \|\theta(0)\|_1^2+  C\int_0^t \left( \|\nu_t\|^2 +  \|\nu\|^2  + \|\theta\|^2 + \|\nabla \theta\|^2 \right)dt.
\]
Hence Gronwall's lemma, bounds of $\nu$ and $\nu_t$ confirms
\[
  \|\theta\|_1 \le C(u) h^4,
\]
if $\theta(0) = 0$, completes the proof~\cite{T.vider, ciarlet}.
\end{proof}

\section{Numerical implementations of the scheme}

Let us consider the solution domain is limited to a finite interval $a\leq
x\leq b.$ Partition the interval $[a,b]$ at points by $x_{m}$ where $%
a=x_{0}<x_{1}<...<x_{N}=b$ and let $h=\frac{b-a}{N},$ $m=0,1,2,...,N$. On
this partition, we shall need the following cubic B-splines $\phi _{m}(x)$
at the points $x_{m},$ $m=0,1,2,...,N.$ The cubic B-spline functions $\phi
_{m}(x)$, (\emph{m}= $-1(1)$ $N+1$) are identified as follows \cite{prenter}
\begin{equation}
\begin{array}{l}
\phi _{m}(x)=\frac{1}{h^{3}}\left\{
\begin{array}{ll}
(x-x_{m-2})^{3},~~ & ~x\in \lbrack x_{m-2},x_{m-1}), \\
{h^{3}}+3{h^{2}}(x-x_{m-1})+3h(x-x_{m-1})^{2}-3(x-x_{m-1})^{3},~~ & ~x\in
\lbrack x_{m-1},x_{m}), \\
{h^{3}}+3{h^{2}}(x_{m+1}-x)+3h(x_{m+1}-x)^{2}-3(x_{m+1}-x)^{3},~~ & ~x\in
\lbrack x_{m},x_{m+1}), \\
(x_{m+2}-x)^{3},~~ & ~x\in \lbrack x_{m+1},x_{m+2}], \\
0~ & ~otherwise.%
\end{array}%
\right.
\end{array}
\label{3}
\end{equation}%
We search the approximation solution $u_{N}(x,t)$ to the\ exact solution $%
u(x,t)$ which uses these cubic B-splines as trial functions
\begin{equation}
u_{N}(x,t)=\sum_{j=-1}^{N+1}\phi _{j}(x)\delta _{j}(t),\   \label{4}
\end{equation}%
where $\delta _{j}(t)$ are time depended quantities or the nodal parameters
to be detected from boundary and weighted residual conditions. Applying the
following transformation

\begin{equation}
h\eta =x-x_{m}\text{ \ \ \ \ }0\leq \eta \leq 1,  \label{250}
\end{equation}%
the finite interval $[x_{m},x_{m+1}]$ is turned into more easily practicable
interval $[0,1]$. Therefore cubic B-spline shape functions $(\ref{3})$
depending on variable $\eta $ on the region $[0,1]$ rearranged with
\begin{equation}
\begin{array}{l}
\phi _{m-1}=(1-\eta )^{3}, \\
\phi _{m}=1+3(1-\eta )+3(1-\eta )^{2}-3(1-\eta )^{3}, \\
\phi _{m+1}=1+3\eta +3\eta ^{2}-3\eta ^{3}, \\
\phi _{m+2}=\eta ^{3}.%
\end{array}
\label{5}
\end{equation}%
All splines, apart from $\phi _{m-1}(x),\phi _{m}(x),\phi _{m+1}(x)$,$\phi
_{m+2}(x)$ and their four principal derivatives are null over the region $%
[0,1].$ Thereby variation of the function $u(x,t)$ over element $[0,1]$ is
approximated by
\begin{equation}
u_{N}(\eta ,t)=\sum_{j=m-1}^{m+2}\delta _{j}\phi _{j},  \label{6}
\end{equation}%
where $\delta _{m-1},\delta _{m},\delta _{m+1},\delta _{m+2}$ and B-spline
element functions $\phi _{m-1},\phi _{m},\phi _{m+1},\phi _{m+2}$ as element
shape functions. The nodal values $u$ and its derivatives up to second order
at the knots $x_{m}$ are given in terms of the parameters $\delta _{m}$ from
the use of the B-splines $(\ref{5})$ and\ and the trial solution $(\ref{6})$%
:
\begin{equation}
\begin{array}{l}
u_{m}=u(x_{m})=\delta _{m-1}+4\delta _{m}+\delta _{m+1}, \\
u_{m}^{\prime }=u^{\prime }(x_{m})=3(-\delta _{m-1}+\delta _{m+1}), \\
u_{m}^{\prime \prime }=u^{\prime \prime }(x_{m})=6(\delta _{m-1}-2\delta
_{m}+\delta _{m+1}).%
\end{array}
\label{7}
\end{equation}%
We take the weight functions $\Phi _{m}$ as quadratic B-splines. The
quadratic B-splines $\Phi _{m}$ at the knots $x_{m}$ are defined as \cite%
{prenter}:

\begin{equation}
\begin{array}{l}
\Phi _{m}(x)=\frac{1}{h^{2}}\left\{
\begin{array}{ll}
(x_{m+2}-x)^{2}-3(x_{m+1}-x)^{2}+3(x_{m}-x)^{2},~~ & ~x\in \lbrack
x_{m-1},x_{m}), \\
(x_{m+2}-x)^{2}-3(x_{m+1}-x)^{2},~~ & ~x\in \lbrack x_{m},x_{m+1}), \\
(x_{m+2}-x)^{2},~~ & ~x\in \lbrack x_{m+1},x_{m+2}), \\
0~ & ~otherwise.%
\end{array}%
\right.%
\end{array}
\label{quad}
\end{equation}%
When we take into consideration of the transformation $(\ref{250})$,
quadratic B-splines $\Phi _{m}$ are written as

\begin{equation}
\begin{array}{l}
\Phi _{m-1}=(1-\eta )^{2}, \\
\Phi _{m}=1+2\eta -2\eta ^{2}, \\
\Phi _{m+1}=\eta ^{2}.%
\end{array}%
\end{equation}%
Performing the Petrov-Galerkin method to Eq.$(\ref{grlw}),$ the weak form of
Eq.$(\ref{grlw})$ is attained as

\begin{equation}
\int_{a}^{b}\Phi (u_{t}+u_{x}+p(p+1)u^{p}u_{x}-\mu u_{xxt})dx=0.  \label{8}
\end{equation}%
For a unique element $[x_{m},x_{m+1}]$ using transformation $(\ref{250})$
into Eq.$(\ref{8}),$ we obtain the following integral equation:

\begin{equation}
\int_{0}^{1}\Phi \left( u_{t}+\frac{1}{^{h}}u_{\eta }+\frac{p(p+1)}{h}\hat{u}%
^{p}u_{\eta }-\frac{\mu }{h^{2}}u_{\eta \eta t}\right) d\eta =0,  \label{9}
\end{equation}%
where $\hat{u}$ is accepted to be constant over an element to ease the
integral. Integrating Eq.$(\ref{9})$ by parts and using Eq.$(\ref{grlw})$
which yields:

\begin{equation}
\int_{0}^{1}[\Phi (u_{t}+\lambda u_{\eta })+\beta \Phi _{\eta }u_{\eta
t}]d\eta =\beta \Phi u_{\eta t}|_{0}^{1},  \label{100}
\end{equation}%
where $\lambda =\frac{1+p(p+1)\hat{u}^{p}}{h}$ and $\beta =\frac{\mu }{h^{2}}%
.$ Assuming the weight function $\Phi _{i}$ with quadratic B-spline shape
functions given by Eq.$(\ref{quad})$ and substituting approximation $(\ref{6}%
)$ into integral Eq.$(\ref{100})$, we get the element contributions in the
form:

\begin{equation}
\sum_{j=m-1}^{m+2}[(\int_{0}^{1}\Phi _{i}\phi _{j}+\beta \Phi _{i}^{\prime
}\phi _{j}^{\prime })d\eta -\beta \Phi _{i}\phi _{j}^{\prime }|_{0}^{1}~~]%
\dot{\delta}_{j}^{e}+\sum_{j=m-1}^{m+2}(\lambda \int_{0}^{1}\Phi _{i}\phi
_{j}^{\prime }d\eta )\delta _{j}^{e}=0,  \label{11}
\end{equation}%
where $\delta ^{e}=(\delta _{m-1},\delta _{m},\delta _{m+1},\delta
_{m+2})^{T}$ are the element parameters and dot states differentiation to $t$
which can be written in matrix form as follows:

\begin{equation}
\lbrack A^{e}+\beta (B^{e}-C^{e})]\dot{\delta}^{e}+\lambda D^{e}\delta
^{e}=0.  \label{120}
\end{equation}%
The element matrices $A_{ij}^{e},B_{ij}^{e},C_{ij}^{e}$ and $D_{ij}^{e}$ are
rectangular $3\times 4$ given by the following integrals;
\begin{equation*}
A_{ij}^{e}=\int_{0}^{1}\Phi _{i}\phi _{j}d\eta =\frac{1}{60}\left[
\begin{array}{cccc}
10 & 71 & 38 & 1 \\
19 & 221 & 221 & 19 \\
1 & 28 & 71 & 10%
\end{array}%
\right] ,
\end{equation*}

\begin{equation*}
B_{ij}^{e}=\int_{0}^{1}\Phi _{i}^{\prime }\phi _{j}^{\prime }d\eta =\frac{1}{%
2}\left[
\begin{array}{cccc}
3 & 5 & -7 & -1 \\
-2 & 2 & 2 & -2 \\
-1 & -7 & 5 & 3%
\end{array}%
\right] ,
\end{equation*}

\begin{equation*}
C_{ij}^{e}=\Phi _{i}\phi _{j}^{\prime }|_{0}^{1}=3\left[
\begin{array}{cccc}
1 & 0 & -1 & 0 \\
1 & -1 & -1 & 1 \\
0 & -1 & 0 & 1%
\end{array}%
\right] ,
\end{equation*}

\begin{equation*}
D_{ij}^{e}=\int_{0}^{1}\Phi _{i}\phi _{j}^{\prime }d\eta =\frac{1}{10}\left[
\begin{array}{cccc}
-6 & -7 & 12 & 1 \\
-13 & -41 & 41 & 13 \\
-1 & -12 & 7 & 6%
\end{array}%
\right]
\end{equation*}%
where $i$ takes the values $1,2,3$ and the $j$ takes the values $%
m-1,m,m+1,m+2$ for the typical element $[x_{m},x_{m+1}].$ A lumped value for
$u$ is obtained from $\left( u_{m}+u_{m+1}\right) ^{2}/4$ as
\begin{equation*}
\lambda =\frac{1}{4h}\left( \delta _{m-1}+5\delta _{m}+5\delta _{m+1}+\delta
_{m+2}\right) ^{2}.
\end{equation*}%
Combining contributions from all elements induces to the\ following matrix
equations
\begin{equation}
\lbrack A+\beta (B-C)]\dot{\delta}+\lambda D\delta =0,  \label{13}
\end{equation}%
where $\delta =(\delta _{-1},\delta _{0},...,\delta _{N},\delta _{N+1})^{T}$
global element parameters. The $A$, $B$, $C$ and $\lambda D$ matrices are
rectangular and row $m$ of each has the following form:
\begin{equation*}
\begin{array}{l}
A=\frac{1}{60}\left( 1,57,302,302,57,1,0\right) ,\text{ }B=\frac{1}{2}%
(-1,-9,10,10,-9,-1,0), \\
C=(0,0,0,0,0,0,0) \\
\lambda D=\frac{1}{10}\left(
\begin{array}{c}
-\lambda _{1},-12\lambda _{1}-13\lambda _{2},7\lambda _{1}-41\lambda
_{2}-6\lambda _{3},6\lambda _{1}+41\lambda _{2}-7\lambda _{3}, \\
13\lambda _{2}+12\lambda _{3},\lambda _{3}%
\end{array}%
\right)
\end{array}%
\end{equation*}%
where
\begin{equation*}
\begin{array}{l}
\lambda _{1}=\frac{1}{4h}\left( \delta _{m-2}+5\delta _{m-1}+5\delta
_{m}+\delta _{m+1}\right) ^{2},\ \lambda _{2}=\frac{1}{4h}\left( \delta
_{m-1}+5\delta _{m}+5\delta _{m+1}+\delta _{m+2}\right) ^{2}, \\
\lambda _{3}=\frac{1}{4h}\left( \delta _{m}+5\delta _{m+1}+5\delta
_{m+2}+\delta _{m+3}\right) ^{2}.%
\end{array}%
\end{equation*}%
Replacing the time derivative $\dot{\delta}$ by the forward difference
approximation $\dot{\delta}=\frac{\delta ^{n+1}-\delta ^{n}}{\Delta t}$ and
the parameter $\delta $ by the Crank-Nicolson formulation $\delta =\frac{1}{2%
}(\delta ^{n}+\delta ^{n+1}),$ then Eq. $(\ref{13})$ reduce to the following
matrix system:
\begin{equation}
\lbrack A+\beta (B-C)+\frac{\lambda \Delta t}{2}D]\delta ^{n+1}=[A+\beta
(B-C)-\frac{\lambda \Delta t}{2}D]\delta ^{n}  \label{14}
\end{equation}%
where $t$ is time step. Applying the boundary conditions ($\ref{bndry})$ to
the system $(\ref{14})$, $(N+1)\times (N+1)$ matrix system is obtained. This
last system is actively solved with a variant of the Thomas algorithm but in
solution process, two or three inner iterations $\delta ^{n\ast }=\delta
^{n}+\frac{1}{2}(\delta ^{n}-\delta ^{n-1})$ are also practiced at each time
step to overcome the non-linearity. Ultimately, a typical member of the
matrix system $(\ref{14})$ \ is written in terms of the nodal parameters $%
\delta ^{n}$ and $\delta ^{n+1}$ as:
\begin{equation}
\begin{array}{l}
\gamma _{1}\delta _{m-2}^{n+1}+\gamma _{2}\delta _{m-1}^{n+1}+\gamma
_{3}\delta _{m}^{n+1}+\gamma _{4}\delta _{m+1}^{n+1}+\gamma _{5}\delta
_{m+2}^{n+1}+\gamma _{6}\delta _{m+3}^{n+1}= \\
\gamma _{6}\delta _{m-2}^{n}+\gamma _{5}\delta _{m-1}^{n}+\gamma _{4}\delta
_{m}^{n}+\gamma _{3}\delta _{m+1}^{n}+\gamma _{2}\delta _{m+2}^{n}+\gamma
_{1}\delta _{m+3}^{n},%
\end{array}
\label{15}
\end{equation}%
where
\begin{equation*}
\begin{array}{l}
\gamma _{1}=\frac{1}{60}-\frac{\beta }{2}-\frac{\lambda \Delta t}{20},~~~\ \
\ \ \ \ \ \gamma _{2}=\frac{57}{60}-\frac{9\beta }{2}-\frac{25\lambda \Delta
t}{20},\gamma _{3}=\frac{302}{60}+\frac{10\beta }{2}-\frac{40\lambda \Delta t%
}{20}, \\
\gamma _{4}=\frac{302}{60}+\frac{10\beta }{2}+\frac{40\lambda \Delta t}{20}%
,~~~\ \gamma _{5}=\frac{57}{60}-\frac{9\beta }{2}+\frac{25\lambda \Delta t}{%
20},\gamma _{6}=\frac{1}{60}-\frac{\beta }{2}+\frac{\lambda \Delta t}{20}.%
\end{array}%
\end{equation*}%
To start the evolution of the vector of parameters $\delta ^{n},$ $\delta
^{0}$ must be calculated by using the periodic boundary condition and
initial condition $u(x,0).$ So, using the relations at the knots $%
u_{N}(x_{m},0)=u(x_{m},0)$, $m=0,1,2,...,N$ and $u_{N}^{^{\prime
}}(x_{0},0)=u^{^{\prime }}(x_{N},0)=0$ together with a variant of the Thomas
algorithm, the initial vector $\delta ^{0}$ is easily got from the following
matrix equation
\begin{equation*}
\left[
\begin{array}{cccccccc}
-3 & 0 & 3 &  &  &  &  &  \\
1 & 4 & 1 &  &  &  &  &  \\
&  &  & \ddots  &  &  &  &  \\
&  &  &  & 1 & 4 & 1 &  \\
&  &  &  & -3 & 0 & 3 &
\end{array}%
\right] \left[
\begin{array}{c}
\delta _{-}^{0}1 \\
\delta _{0}^{0} \\
\vdots  \\
\delta _{N}^{0} \\
\delta _{N+1}^{0}%
\end{array}%
\right] =\left[
\begin{array}{c}
u^{\prime }(x_{0},0) \\
u(x_{0},0) \\
\vdots  \\
u(x_{N},0) \\
u^{\prime }(x_{N},0)%
\end{array}%
\right] .
\end{equation*}

\section{ Stability analysis}

In this section, like other authors \cite{esen,roshan,khalifa,ras} our
stability analysis is based on the Von Neumann theory in which the growth
factor of a typical Fourier model defined as

\begin{equation}
\delta _{j}^{n}=g^{n}e^{ijkh},  \label{260}
\end{equation}%
where $k$ is mode number and $h$ is element greatness. To implement the
Fourier stability analysis, Eq. $(\ref{grlw})$ needs to be linearized by
assuming that the quantity $u^{p}$ in the nonlinear term $u^{p}u_{x}$ is
locally constant. Substituting the Fourier mode ($\ref{260})$ into
linearized scheme of ($\ref{15})$, we get
\begin{equation}
g=\frac{a-ib}{a+ib},  \label{16}
\end{equation}%
where
\begin{equation}
\begin{array}{l}
a=\left( 302+300\beta \right) \cos \left( \frac{\theta }{2}\right) h+\left(
57-270\beta \right) \cos \left( \frac{3\theta }{2}\right) h+\left( 1-30\beta
\right) \cos \left( \frac{5\theta }{2}\right) h, \\
b=120\lambda \Delta t\sin \left( \frac{\theta }{2}\right) h+75\lambda \Delta
t\sin \left( \frac{3\theta }{2}\right) h+3\lambda \Delta t\sin \left( \frac{%
5\theta }{2}\right) h,%
\end{array}
\label{17}
\end{equation}%
so that $|g|$ is $1$ and our linearized scheme is neutrally stable.

\section{Computer implementations and illustrations}

In this part, we introduce the results of the numerical experiments of our
algorithm for the solution of the GRLW Eq. $(\ref{intl})$--$(\ref{bndry})$
for a single solitary wave and an interaction of two solitary waves. We also
display the development of the Maxwellian initial condition into solitary
waves. In order to demonstrate how favorable our numerical algorithm
foresees the position and amplitude of the solution as the simulation
progresses, we provides for the \ following error norms:
\begin{equation*}
\mathit{L}_{2}=\left\Vert u^{exact}-u_{N}\right\Vert _{2}\simeq \sqrt{%
h\sum_{J=0}^{N}\left\vert u_{j}^{exact}-\left( u_{N}\right) _{j}\right\vert
^{2}},
\end{equation*}%
and
\begin{equation*}
\ \mathit{L}_{\infty }=\left\Vert u^{exact}-u_{N}\right\Vert _{\infty
}\simeq \max_{j}\left\vert u_{j}^{exact}-\left( u_{N}\right) _{j}\right\vert
.
\end{equation*}%
With the boundary condition $u\rightarrow 0$ for $x\rightarrow \pm \infty $
the exact solution of the GRLW equation is \cite{garcia}

\begin{equation*}
\mathit{u(x,t)}=\sqrt[p]{\frac{c(p+2)}{2p}\sec h^{2}[\frac{p}{2}\sqrt{\frac{c%
}{\mu (c+1)}}(x-(c+1)t-x_{0})]}
\end{equation*}%
where $\sqrt[p]{\frac{c(p+2)}{2p}}$ is amplitude, $c+1$ is the speed of the
wave traveling in the positive direction of the $x$-axis, $x_{0}$ is
arbitrary constant. There are three conserved quantities
\begin{align}
I_{1}& =\int_{-\infty }^{\infty }u(x,t)dx,  \notag \\
I_{2}& =\int_{-\infty }^{\infty }[{u^{2}(x,t)+\mu u_{x}^{2}(x,t)}]dx,~~~
\label{invrnt} \\
I_{3}& =\int_{-\infty }^{\infty }[{u^{4}(x,t)-\mu u^{2}(x,t)}]dx  \notag
\end{align}%
for the GRLW equation. These are correspond to mass, momentum and energy
respectively.

\subsection{Dispersion of a single solitary wave}

In our computational work for the first set, we prefer the parameters $p=2$,
$c=1$, $h=0.2$, $\Delta t$ $=0.025$, $\mu =1$, $x_{0}$ $=40$ with interval $%
[0,100]$ to match up with that of previous papers \cite%
{roshan,karakoc,karakoc1,gard3,khalifa}. These values yield the amplitude $%
1.0$ and the run of the algorithm is continued up to time $t=10$ over the
solution region. Analytical values of the invariants are $I_{1}$ $=4.442883$%
, $I_{2}$ $=3.299832$ and $I_{3}=1.414214.$ Values of the three invariants
as well as $\mathit{L}_{2}$ and $\ \mathit{L}_{\infty }$-error norms from
our method have been computed and tabulated in Table $(\ref{400})$.
Referring to Table $(\ref{400}),$ the error norms $\mathit{L}_{2}$ and $%
\mathit{L}_{\infty }$ remain less than $2.4154685\times 10^{-3}$ and $%
1.07968675\times 10^{-3},$ the invariants $I_{1},I_{2}$ and $I_{3}$ change
from their initial values by less than $3.10\times \ 10^{-4},$ $4.89\times \
10^{-4}$ and $4.79\times \ 10^{-4},$ respectively, throughout the
simulation. Also, our invariants are almost stable as time increases and the
agreement between numerical and analytic solutions is perfect. Hence our
method is acceptedly conservative. Comparisons with our results with exact
solution as well as the obtained values in \cite%
{roshan,karakoc,karakoc1,gard3,khalifa} have been made and listed in Table $(%
\ref{401})$ at $t=10$. This table evidentially indicates that the error
norms got by our method are marginally less than the others. The motion of
solitary wave using our scheme is plotted at time $t=0,5,10$ in Fig. $(\ref%
{500}).$\ It is obvious from the figure that the suggested method performs
the motion of propagation of a solitary wave admissibly, which moved to the
right with the preserved amplitude and shape. Initially, the amplitude of
solitary wave is $1.00000$ and its top position is pinpionted at $x=40$. At $%
t=10,$ its amplitude is recorded as $0.99928$ with center $x=60$. Thereby
the absolute difference in amplitudes over the time interval $[0,10]$ are
observed as $7.16\times 10^{-3}$. The quantile of error at discoint times
are depicted in Fig.$(\ref{501})$ . The error aberration varies from $%
-1\times 10^{-3}$ to $1\times 10^{-3}.$%
\begin{table}[h!]
\caption{Invariants and errors for single solitary wave with $%
p=2,c=1,h=0.2,\Delta t=0.025,\protect\mu =1,x\in \left[ 0,100\right] .$ }
\label{400}\vskip-1.cm
\par
\begin{center}
{\scriptsize
\begin{tabular}{cccccc}
\hline\hline
$Time$ & $I_{1}$ & $I_{2}$ & $I_{3}$ & $L_{2}\times 10^{3}$ & $L_{\infty
}\times 10^{3}$ \\ \hline
0 & 4.442866 & 3.299813 & 1.414214 & 0.000000 & 0.000000 \\
2 & 4.442940 & 3.299938 & 1.414330 & 1.948707 & 1.190456 \\
4 & 4.443005 & 3.300033 & 1.414425 & 2.362855 & 1.222540 \\
6 & 4.443068 & 3.300124 & 1.414515 & 2.449792 & 1.198936 \\
8 & 4.443129 & 3.300213 & 1.414604 & 2.448242 & 1.150862 \\
10 & 4.443175 & 3.300302 & 1.414692 & 2.415468 & 1.079686 \\ \hline\hline
\end{tabular}
}
\end{center}
\end{table}
\begin{table}[h!]
\caption{Comparisons of results for single solitary wave with $p=2,$ $c=1,$ $%
h=0.2,$ $\Delta t=0.025,$ $\protect\mu =1,$ $x\in \left[ 0,100\right] .$ }
\label{401}\vskip-1.cm
\par
\begin{center}
{\scriptsize
\begin{tabular}{cccccc}
\hline\hline
$Method$ & $I_{1}$ & $I_{2}$ & $I_{3}$ & $L_{2}\times 10^{3}$ & $L_{\infty
}\times 10^{3}$ \\ \hline
Analytic & 4.44288 & 3.29983 & 1.41421 & 0.000000 & 0.000000 \\
Our Method & 4.443175 & 3.300302 & 1.414692 & 2.415468 & 1.079686 \\
Petrov-Galerkin\cite{roshan} & 4.44288 & 3.29981 & 1.41416 & 3.00533 &
1.68749 \\
Septic Collocation First Scheme\cite{karakoc} & 4.442866 & 3.299822 &
1.414204 & 2.632463 & 1.393064 \\
Septic Collocation Second Scheme\cite{karakoc} & 4.442866 & 3.299715 &
1.414312 & 2.571481 & 1.340210 \\
Cubic Galerkin\cite{karakoc1} & 3.801670 & 2.888066 & 0.979294 & 13.291080 &
8.478107 \\
Cubic B-spline coll-CN\cite{gard3} & 4.442 & 3.299 & 1.413 & 16.39 & 9.24 \\
Cubic B-spline coll + PA-CN\cite{gard3} & 4.440 & 3.296 & 1.411 & 20.3 & 11.2
\\
Cubic B-spline collocation\cite{khalifa} & 4.44288 & 3.29983 & 1.41420 &
9.30196 & 5.43718 \\ \hline\hline
\end{tabular}
}
\end{center}
\end{table}
{\normalsize
\begin{figure}[h!]
{\normalsize {\scriptsize {\centering{\small %
\includegraphics[scale=.25]{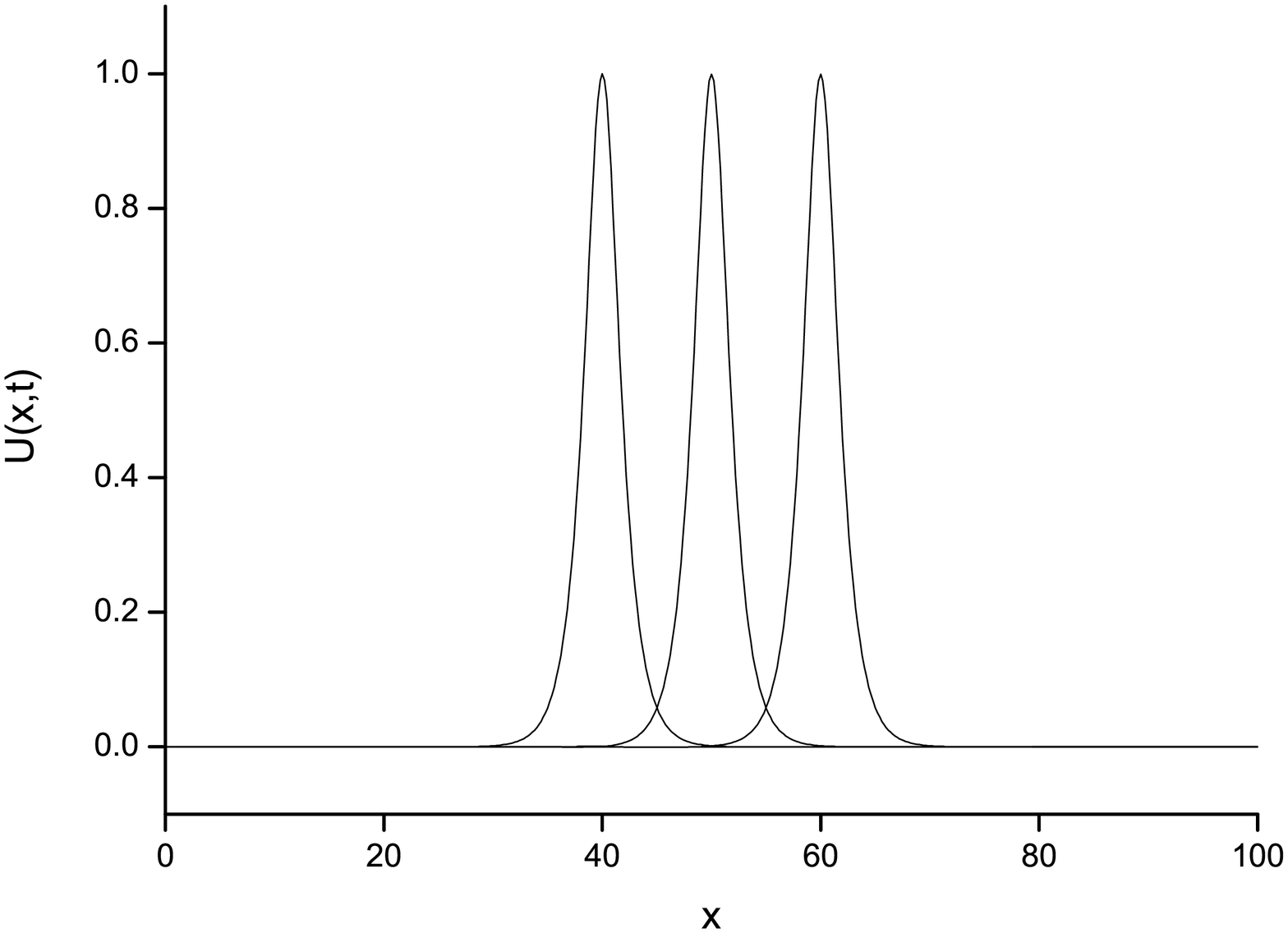}} } }  }
\caption{Motion of single solitary wave for $p=2$, $c=1$, $h=0.2$, $\Delta t$
$=0.025$ over the interval $[0,100]$ at $t=0,5,10.$}
\label{500}
\end{figure}
\begin{figure}[h!]
{\normalsize {\scriptsize {\centering{\small %
\includegraphics[scale=.25]{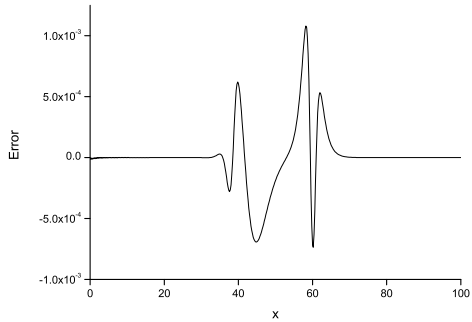}} } }  }
\caption{Error graph for $p=2$, $c=1$, $h=0.2$, $\Delta t$ $=0.025$ at $t=10$%
.}
\label{501}
\end{figure}
}

For the second set, we select the parameters $p=3,c=6/5,h=0.1,\Delta
t=0.025,\mu =1$, $x_{0}$ $=40$ with interval $[0,100]$ to coincide with that
of previous papers \cite{roshan,karakoc,karakoc1}. These parameters produce
the amplitude $1.0$ and the computations are carried out for times up to $%
t=10.$ The error norms $L_{2},$ $L_{\infty }$ and conservation quantities $%
I_{1},I_{2}$ and $I_{3}$ are computed, which are recorded in Table $(\ref%
{402})$. According to Table $(\ref{402})$ the error norms $\mathit{L}_{2}$
and $\mathit{L}_{\infty }$ remain less than $6.12802937\times 10^{-3}$ and $%
3.72213891\times 10^{-3},$ the invariants $I_{1},I_{2}$ and $I_{3}$ change
from their initial values by less than $9.75\times \ 10^{-5},$ $4.32\times \
10^{-5}$ and $4.78\times \ 10^{-4},$ respectively, during the simulation.
Also, our invariants are almost constant as time increases. Therefore our
method is satisfactorily conservative. In Table $(\ref{403})$ the
performance of the our new method is compared with other methods \cite%
{roshan,karakoc,karakoc1} at $t=10$. It is observed that errors of the
method \cite{roshan,karakoc,karakoc1} are considerably larger than those
obtained with the present scheme. Perspective views of the traveling
solitons are graphed at time $t=0,5,10$ in Fig.$(\ref{502}).$ It is clear
from the figure that the single soliton moved to the right with the
preserved amplitude and shape. The amplitude is $1.00000$ at $t=0$ and
located at $x=40$, while it is $0.99958$ at $t=10$ and located at $x=62$.
The absolute difference in amplitudes over the time interval $[0,10]$ are
found as $4.2\times 10^{-4}$. The aberration of error at discrete times are
modelled in Fig.$(\ref{503}).$ The error deviation varies from $-4\times
10^{-3}$ to $4\times 10^{-3}$.
\begin{table}[h!]
\caption{Invariants and errors for single solitary wave with $p=3,$ $c=6/5,$
$h=0.1,$ $\Delta t=0.025,$ $\protect\mu =1,$ $x\in \left[ 0,100\right] .$ }
\label{402}\vskip-1.cm
\par
\begin{center}
{\scriptsize
\begin{tabular}{cccccc}
\hline\hline
$Time$ & $I_{1}$ & $I_{2}$ & $I_{3}$ & $L_{2}\times 10^{3}$ & $L_{\infty
}\times 10^{3}$ \\ \hline
0 & 3.797185 & 2.881250 & 0.972968 & 0.000000 & 0.000000 \\
2 & 3.797187 & 2.881258 & 0.973414 & 1.700682 & 1.174285 \\
4 & 3.797187 & 2.881257 & 0.973473 & 2.805942 & 1.797229 \\
6 & 3.797187 & 2.881255 & 0.973486 & 3.899300 & 2.428864 \\
8 & 3.797200 & 2.881254 & 0.973487 & 5.007404 & 3.073644 \\
10 & 3.797282 & 2.881293 & 0.973446 & 6.128029 & 3.722138 \\ \hline\hline
\end{tabular}
}
\end{center}
\end{table}
\begin{table}[h!]
\caption{Comparisons of results for single solitary wave with $p=3,$ $c=6/5,$
$h=0.1,$ $\Delta t=0.025,$ $\protect\mu =1,$ $x\in \left[ 0,100\right] .$ }
\label{403}\vskip-1.cm
\par
\begin{center}
{\scriptsize
\begin{tabular}{cccccc}
\hline\hline
$Method$ & $I_{1}$ & $I_{2}$ & $I_{3}$ & $L_{2}\times 10^{3}$ & $L_{\infty
}\times 10^{3}$ \\ \hline
Our Method & 3.797282 & 2.881293 & 0..973446 & 6.128029 & 3.722138 \\
Petrov-Galerkin\cite{roshan} & 3.79713 & 2.88123 & 0.972243 & 7.76745 &
4.70875 \\
Septic Collocation First Scheme\cite{karakoc} & 3.797185 & 2.881252 &
0.973145 & 8.972983 & 5.175982 \\
Septic Collocation Second Scheme\cite{karakoc} & 3.797133 & 2.881089 &
0.973128 & 7.778169 & 4.441873 \\
Cubic Galerkin\cite{karakoc1} & 3.801670 & 2.888066 & 0.979294 & 13.291080 &
8.478107 \\ \hline\hline
\end{tabular}
}
\end{center}
\end{table}
{\normalsize
\begin{figure}[h!]
{\normalsize {\scriptsize {\centering{\small %
\includegraphics[scale=.25]{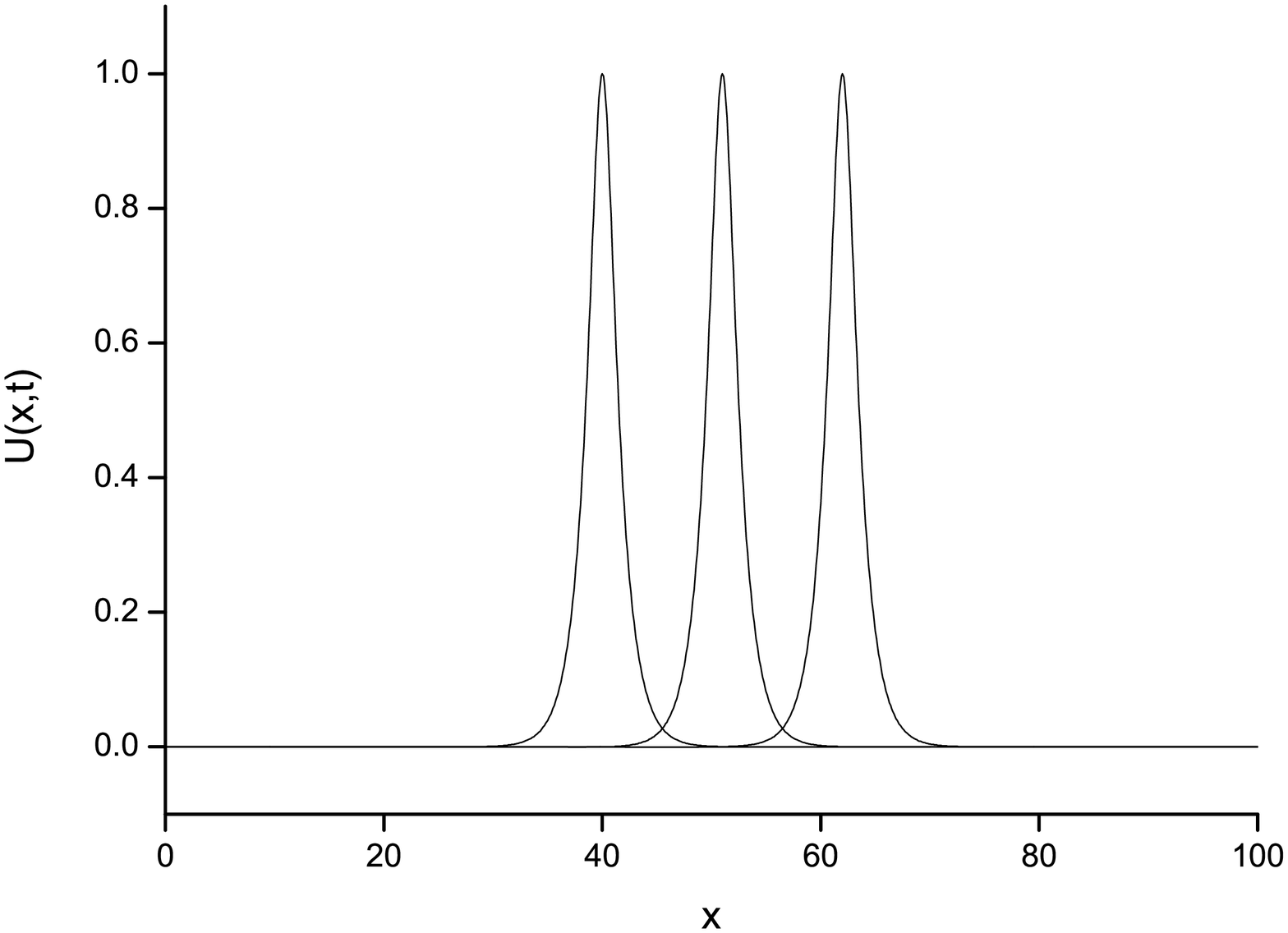}} } }  }
\caption{Motion of single solitary wave for $p=3$, $c=6/5$, $h=0.1$, $\Delta
t$ $=0.025$ over the interval $[0,100]$ at $t=0,5,10.$}
\label{502}
\end{figure}
\begin{figure}[h!]
{\normalsize {\scriptsize {\centering{\small %
\includegraphics[scale=.25]{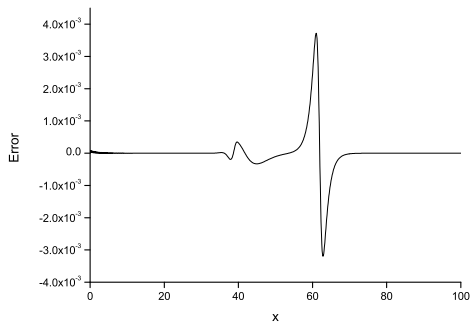}} } }  }
\caption{Error graph for $p=3$, $c=6/5$, $h=0.1$, $\Delta t$ $=0.025$ at $%
t=10$.}
\label{503}
\end{figure}
\ \ \ \ }

Finally, we choose the parameters $p=4,c=4/3,h=0.1,\Delta t=0.01,\mu =1$, $%
x_{0}$ $=40$ over the region $[0,100]$ to compare with those of earlier
papers \cite{roshan,karakoc,karakoc1}. These parameters lead to amplitude $%
1.0$ and the simulations are executed to time $t=10$ to invent the error
norms $L_{2}$ and $L_{\infty }$ and the numerical invariants $I_{1},I_{2}$
and $I_{3}.$ For these values of the parameters, the conservation properties
and the $L_{2}$-error as well as the $L_{\infty }$-error norms have been
given in Table$(\ref{4040})$ for various values of the time level $t$. It
can be noted from Table $(\ref{4040}),$ the error norms $\mathit{L}_{2}$ and
$\mathit{L}_{\infty }$ remain less than $1.28342020\times 10^{-3}$ and $%
0.82165081\times 10^{-3},$ the invariants $I_{1},I_{2}$ and $I_{3}$ change
from their initial values by less than $9.02\times \ 10^{-5},$ $5.08\times \
10^{-5}$ and $7.97\times \ 10^{-4},$ respectively, throughout the
simulation. Also, our invariants are almost constant as time increases.
Therefore we can say our method is sensibly conservative. The comparison
between the results obtained by the present method with those in the other
studies \cite{roshan,karakoc,karakoc1} is also documented in Table $(\ref%
{4050})$. It is noticeably seen from the table that errors of the present
method are radically less than those obtained with the earlier schemes \cite%
{roshan,karakoc,karakoc1}. For visual representation, the simulations of
single soliton for values $p=4,c=4/3,h=0.1,\Delta t=0.01$ at times $t=0,5$
and $10$ are illustrated in Figure $(\ref{504})$. It is understood from this
figure that the numerical scheme performs the motion of propagation of a
single solitary wave, which moves to the right at nearly unchanged speed and
conserves its amplitude and shape with increasing time. The amplitude is $%
1.00000$ at $t=0$ and located at $x=40$, while it is $0.99892$ at $t=10$ and
located at $x=63.3$. The absolute difference in amplitudes at times $t=0$
and $t=10$ is $1.08\times 10^{-3}$ so that there is a little change between
amplitudes. Error distributions at time $t=10$ are shown graphically in
Figure $(\ref{505})$. As it is seen, the maximum errors are between $%
-6\times 10^{-4}$ to $1\times 10^{-3}$ and occur around the central position
of the solitary wave.
\begin{table}[h!]
\caption{Invariants and errors for single solitary wave with $p=4,$ $c=4/3,$
$h=0.1,$ $\Delta t=0.01,$ $\protect\mu =1,$ $x\in \left[ 0,100\right] .$ }
\label{4040}\vskip-1.cm
\par
\begin{center}
{\scriptsize
\begin{tabular}{cccccc}
\hline\hline
$Time$ & $I_{1}$ & $I_{2}$ & $I_{3}$ & $L_{2}\times 10^{3}$ & $L_{\infty
}\times 10^{3}$ \\ \hline
0 & 3.468709 & 2.671691 & 0.729204 & 0.000000 & 0.000000 \\
2 & 3.468718 & 2.671714 & 0..729969 & 0.967786 & 0.708600 \\
4 & 3.468719 & 2.671714 & 0.730017 & 1.040242 & 0.591250 \\
6 & 3.468720 & 2.671714 & 0.730027 & 1.102854 & 0.611363 \\
8 & 3.468731 & 2.671714 & 0.730028 & 1.183442 & 0.715175 \\
10 & 3.468799 & 2.671742 & 0.730001 & 1.283420 & 0.821650 \\ \hline\hline
\end{tabular}
}
\end{center}
\end{table}
\begin{table}[h!]
\caption{Comparisons of results for single solitary wave with $p=4,$ $c=4/3,$
$h=0.1,$ $\Delta t=0.01,$ $\protect\mu =1,$ $x\in \left[ 0,100\right] .$ }
\label{4050}\vskip-1.cm
\par
\begin{center}
{\scriptsize
\begin{tabular}{cccccc}
\hline\hline
$Method$ & $I_{1}$ & $I_{2}$ & $I_{3}$ & $L_{2}\times 10^{3}$ & $L_{\infty
}\times 10^{3}$ \\ \hline
Our Method & 3.468799 & 2.671742 & 0..730001 & 1.283420 & 0.821650 \\
Petrov-Galerkin\cite{roshan} & 3.46866 & 2.67168 & 0.728881 & 2.46065 &
1.56620 \\
Septic Collocation First Scheme\cite{karakoc} & 3.468709 & 2.671696 &
0.729258 & 3.351740 & 2.049733 \\
Septic Collocation Second Scheme\cite{karakoc} & 3.468671 & 2.671658 &
0.729237 & 2.698709 & 1.656002 \\
Cubic Galerkin\cite{karakoc1} & 3.470439 & 2.674445 & 0.731987 & 1.511394 &
0.857585 \\ \hline\hline
\end{tabular}
}
\end{center}
\end{table}
{\normalsize
\begin{figure}[h!]
{\normalsize {\scriptsize {\centering{\small %
\includegraphics[scale=.25]{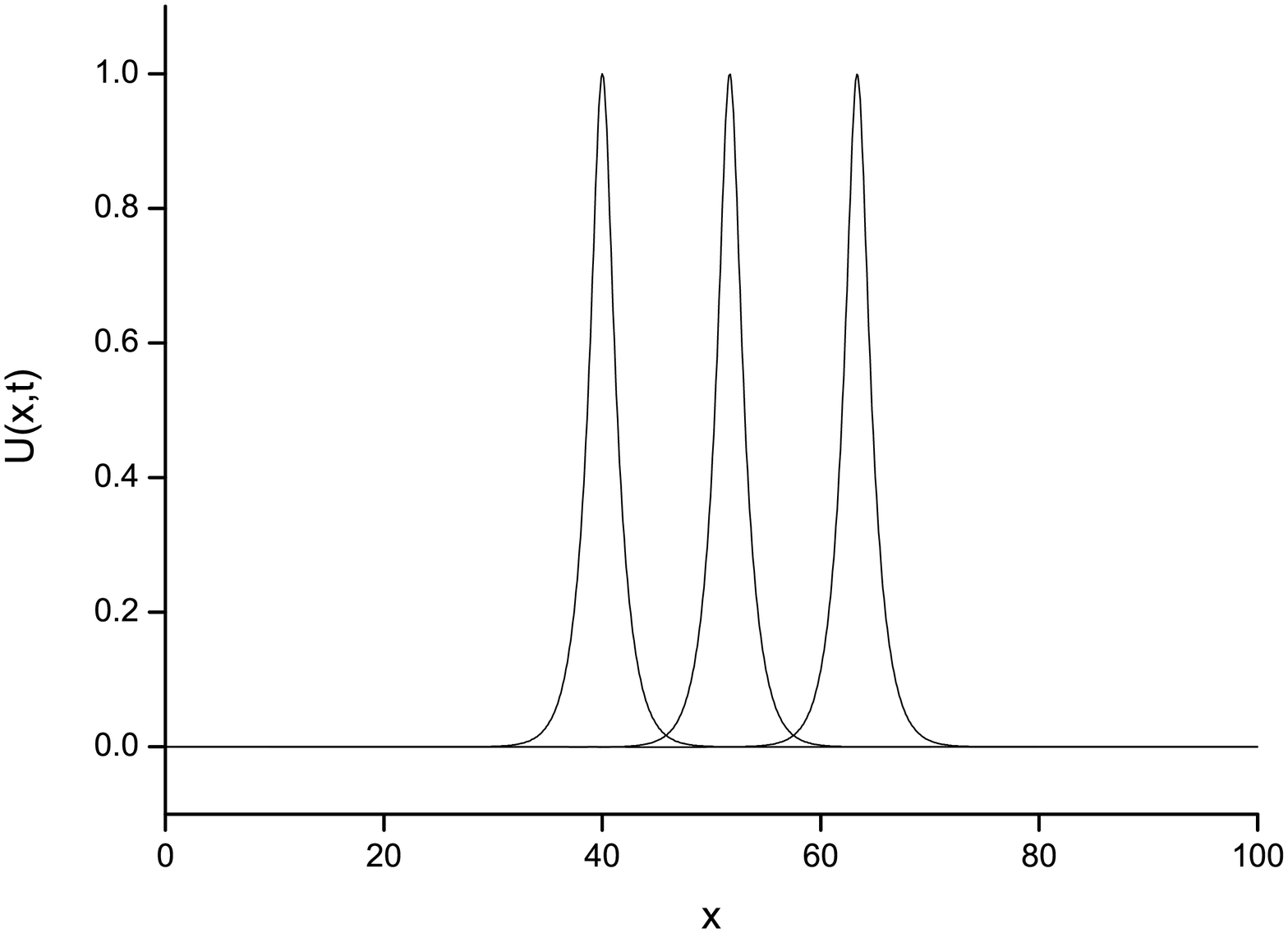}} } }  }
\caption{Motion of single solitary wave for $p=4$, $c=4/3$, $h=0.1$, $\Delta
t$ $=0.01$ over the interval $[0,100]$ at $t=0,5,10.$}
\label{504}
\end{figure}
\begin{figure}[h!]
{\normalsize {\scriptsize {\centering{\small %
\includegraphics[scale=.25]{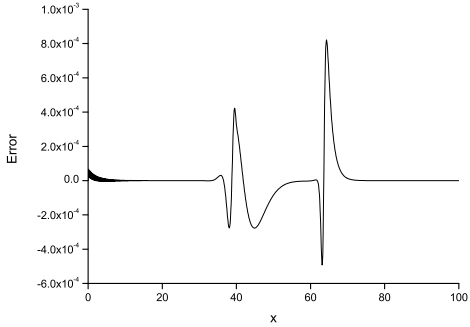}} } }  }
\caption{Error graph for $p=4$, $c=4/3$, $h=0.1$, $\Delta t$ $=0.01$ at $%
t=10 $.}
\label{505}
\end{figure}
}

\subsection{Interaction of two solitary waves}

As a second problem, we have focused on the behavior of the interaction of
two solitary waves having different amplitudes and moving in the same
direction. We provide for the GRLW equation with initial \ conditions given
by the linear sum of two well separated solitary waves of various amplitudes

\begin{equation}
u(x,0)=\sum_{j=1}^{2}\sqrt[p]{\frac{c_{j}(p+2)}{2p}\sec h^{2}[\frac{p}{2}%
\sqrt{\frac{c_{j}}{\mu (c_{j}+1)}}(x-x_{j})]},  \label{ini. for two solit}
\end{equation}%
where $c_{j}$ and $x_{j}$, $j=1,2$ are arbitrary constants. For the
simulation, we firstly choose $p=3,c_{1}=48/5,c_{2}=6/5,$ $h=0.1$, $\Delta
t=0.01,\mu =1$ over the interval $0\leq x\leq 120.$ The amplitudes are in
the ratio $2:1$. Calculations are performed to time $t=6$. The results are
listed in Table $(\ref{4051})$. Referring to this table, it is noticed that
the numerical values of the invariants are very closed with the methods \cite%
{roshan,karakoc,karakoc1} during the computer run. The initial function was
placed with the larger wave to the left of the smaller one as seen in the
Fig. $(\ref{40510})$a. Both waves move to the right with velocities
dependent upon their magnitudes. According to Fig. $(\ref{40510})$, the
larger wave catches up with the smaller wave at about $t=3$, the overlapping
process continues until $t=4$, then two solitary waves emerge from the
interaction and resume their former shapes and amplitudes. At $t=6$, the
magnitude of the smaller wave is $1.00029$ on reaching position $x=60.0$,
and of the larger wave $1.99213$ having the position $x=85.3$, so that the
difference in amplitudes is $0.00029$ for the smaller wave and $0.00787$ for
the larger wave. The changes of the invariants for this case are
satisfactorily small. Secondly, to ensure an interaction of two solitary
waves take place, calculation is carried out with the parameters \ $%
p=4,c_{1}=64/3,c_{2}=4/3,$ $h=0.125$, $\Delta t=0.01,\mu =1$ over the
interval $0\leq x\leq 200.$ The parameters give solitary waves of different
amplitudes $2$ and $1$ having centers at $x_{1}=20$ and $x_{2}=80.$ The
results are given in Table $(\ref{40530}).$ According to the this table, it
is realized that the numerical values of the invariants are very closed with
the methods \cite{roshan,karakoc,karakoc1} during the computer run. The
initial function was placed with the larger wave to the left of the smaller
one as seen in the Fig. $(\ref{4052})$a. Both waves move to the right with
velocities dependent upon their magnitudes. According to Fig. $(\ref{4052})$%
, the larger wave catches up with the smaller wave at about $t=3$, the
overlapping process continues until $t=5$, then two solitary waves emerge
from the interaction and resume their former shapes and amplitudes.
\begin{table}[h]
\caption{Invariants for interaction of two solitary waves with $p=3.$ }
\label{4051}\vskip-1.cm
\par
\begin{center}
{\scriptsize
\begin{tabular}{cccccc}
\hline\hline
&  &  &  &  &  \\ \hline
& $t$ & $0$ & $2$ & $4$ & $6$ \\ \hline
& Our Method & 9.690777 & 9.690777 & 9.690777 & 9.690777 \\
& \cite{roshan} & 9.69075 & 9.69074 & 9.69074 & 9.69074 \\
$I_{1}$ & \cite{karakoc} First & 9.690777 & 9.690777 & 9.690777 & 9.690778
\\
& \cite{karakoc} Second & 9.690777 & 9.688117 & 9.686015 & 9.683462 \\
& \cite{karakoc1} & 9.6907 & 9.6906 & 9.6898 & 9.6901 \\
& Our Method & 12.944360 & 12.928161 & 12.957476 & 12.988509 \\
& \cite{roshan} & 12.9444 & 12.9452 & 12.9453 & 12.9454 \\
$I_{2}$ & \cite{karakoc} First & 12.944391 & 12.944392 & 12.944393 &
12.944394 \\
& \cite{karakoc} Second & 12.944391 & 12.939062 & 12.970312 & 13.002753 \\
& \cite{karakoc1} & 12.9443 & 12.9440 & 12.9418 & 12.9426 \\
& Our Method & 17.018706 & 17.034905 & 17.005590 & 16.974557 \\
& \cite{roshan} & 17.0184 & 16.9835 & 16.9261 & 16.9113 \\
$I_{3}$ & \cite{karakoc} First & 17.018675 & 17.02567 & 16.981696 & 16.952024
\\
& \cite{karakoc} Second & 17.018675 & 17.02400 & 16.992754 & 16.960313 \\
& \cite{karakoc1} & 17.0187 & 17.0324 & 16.9849 & 16.9557 \\ \hline\hline
\end{tabular}
}
\end{center}
\end{table}
\begin{table}[h]
\caption{Invariants for interaction of two solitary waves with $p=4.$ }
\label{40530}\vskip-1.cm
\par
\begin{center}
{\scriptsize
\begin{tabular}{cccccc}
\hline\hline
&  &  &  &  &  \\ \hline
& $t$ & $0$ & $2$ & $4$ & $6$ \\ \hline
& Our Method & 8.834272 & 8.834272 & 8.834272 & 8.834272 \\
& \cite{roshan} & 8.83427 & 8.84204 & 8.84209 & 8.83434 \\
$I_{1}$ & \cite{karakoc} First & 8.834272 & 8.834160 & 8.834053 & 8.8339467
\\
& \cite{karakoc} Second & 8.834272 & 8.564186 & 8.435464 & 8.327161 \\
& \cite{karakoc1} & 8.8342 & 8.7089 & 8.6518 & 8.6134 \\
& Our Method & 12.170706 & 11.339311 & 11.209384 & 15.812521 \\
& \cite{roshan} & 12.1697 & 12.3700 & 12.5703 & 12.6103 \\
$I_{2}$ & \cite{karakoc} First & 12.170887 & 12.170537 & 12.170205 &
12.169873 \\
& \cite{karakoc} Second & 12.170887 & 11.939598 & 11.977097 & 11.814722 \\
& \cite{karakoc1} & 12.1707 & 11.7871 & 11.6179 & 11.4992 \\
& Our Method & 14.029604 & 14.860999 & 14.990927 & 10.387789 \\
& \cite{roshan} & 14.0302 & 13.9607 & 13.9805 & 14.6974 \\
$I_{3}$ & \cite{karakoc} First & 14.029423 & 14.413442 & 14.351624 &
14.292901 \\
& \cite{karakoc} Second & 14.029423 & 14.260712 & 14.223214 & 14.385588 \\
& \cite{karakoc1} & 14.0296 & 12.9204 & 12.1972 & 11.9640 \\ \hline\hline
\end{tabular}
}
\end{center}
\end{table}
\begin{figure}[h]
\centering{\small \includegraphics[scale=.20]{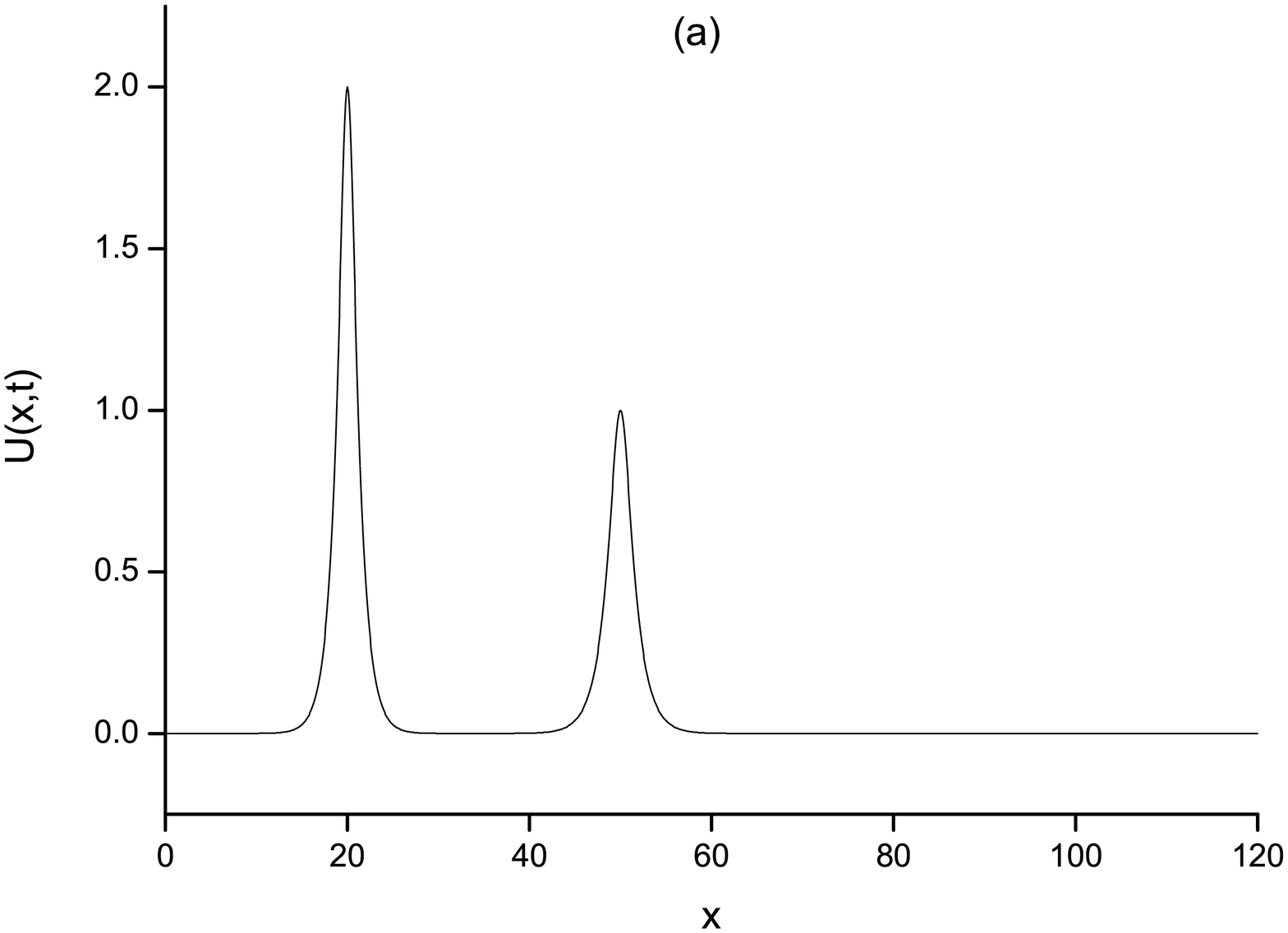} %
\includegraphics[scale=.20]{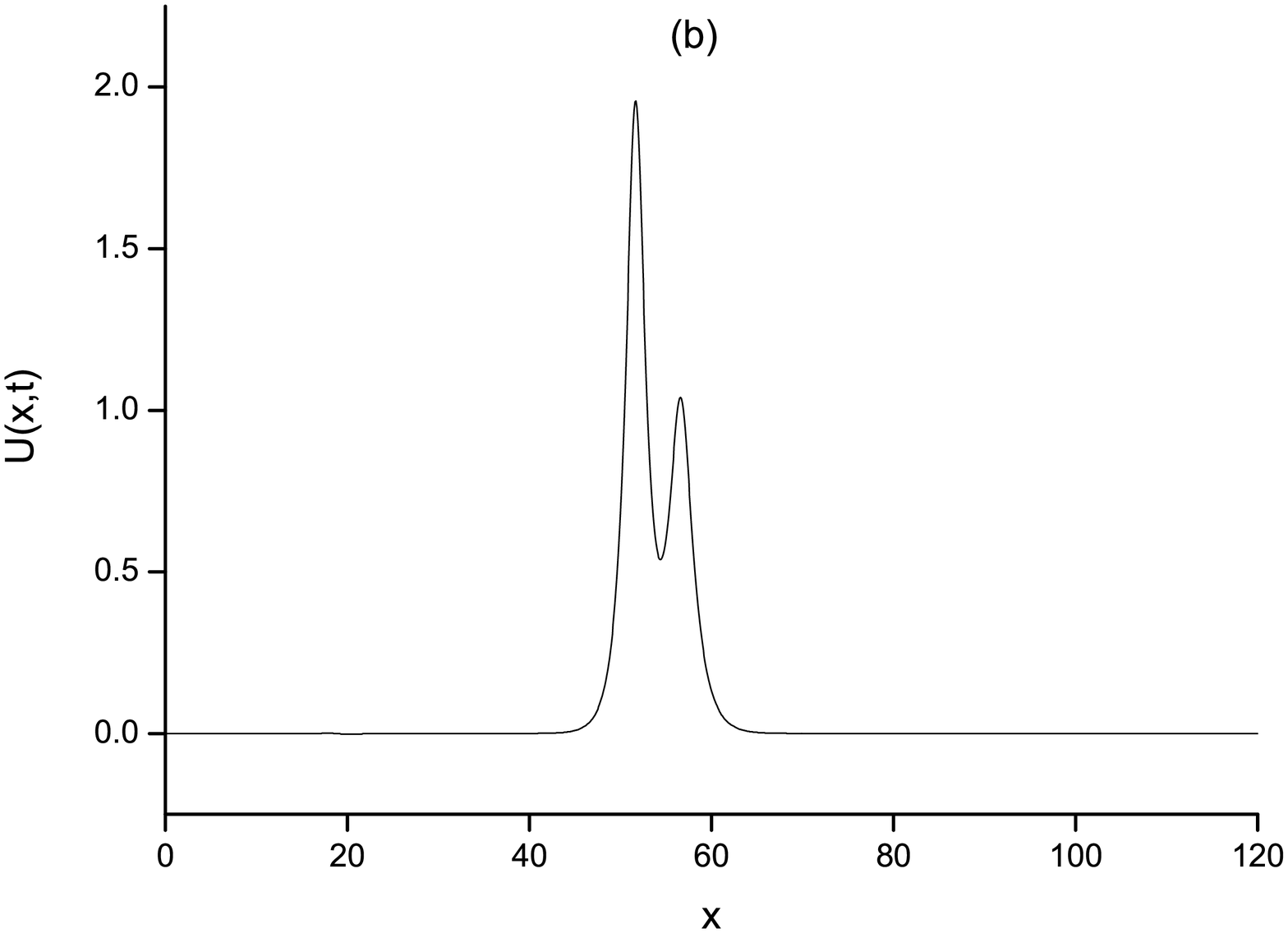}}
\par
{\small \includegraphics[scale=.20]{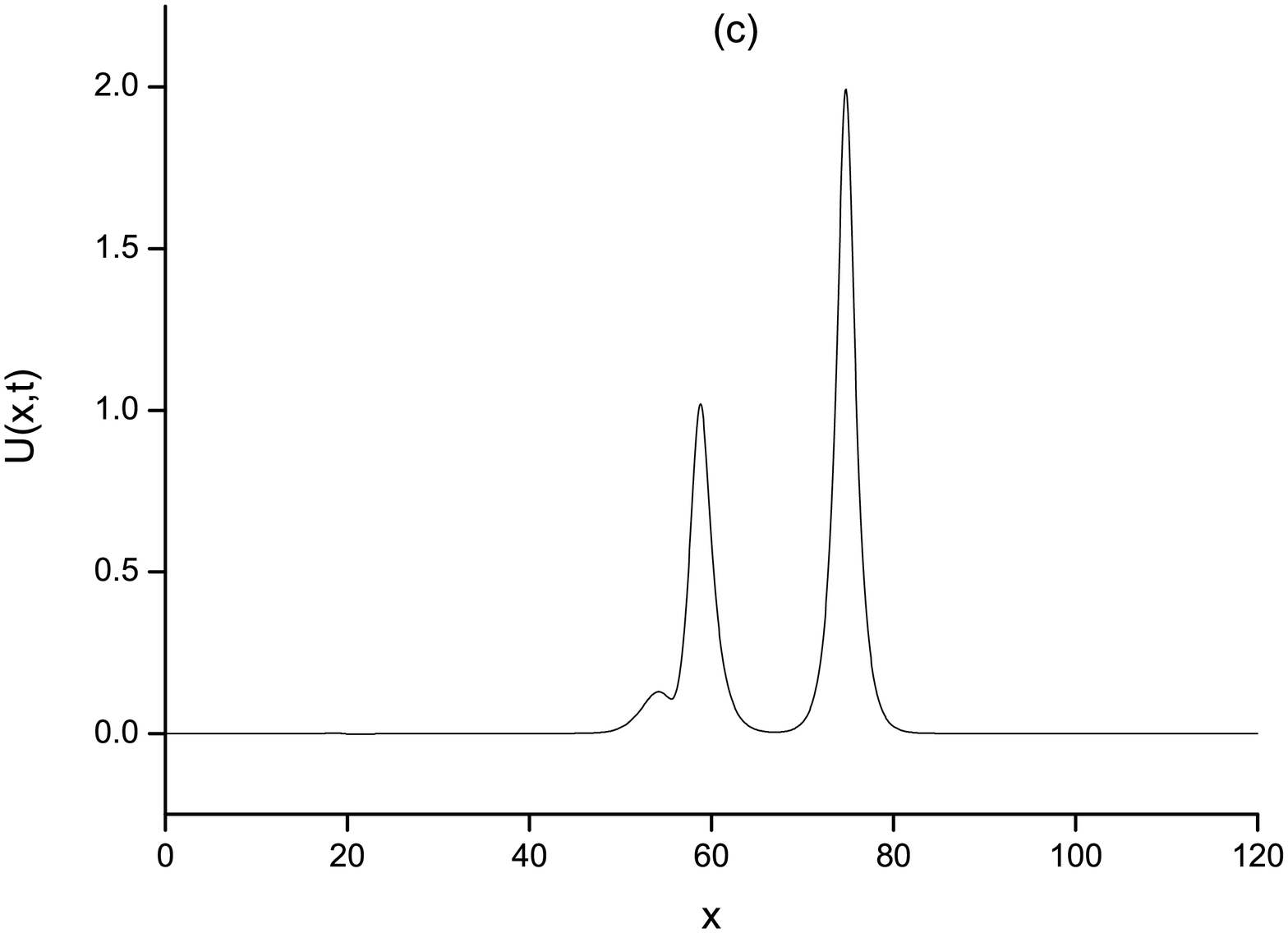} %
\includegraphics[scale=.20]{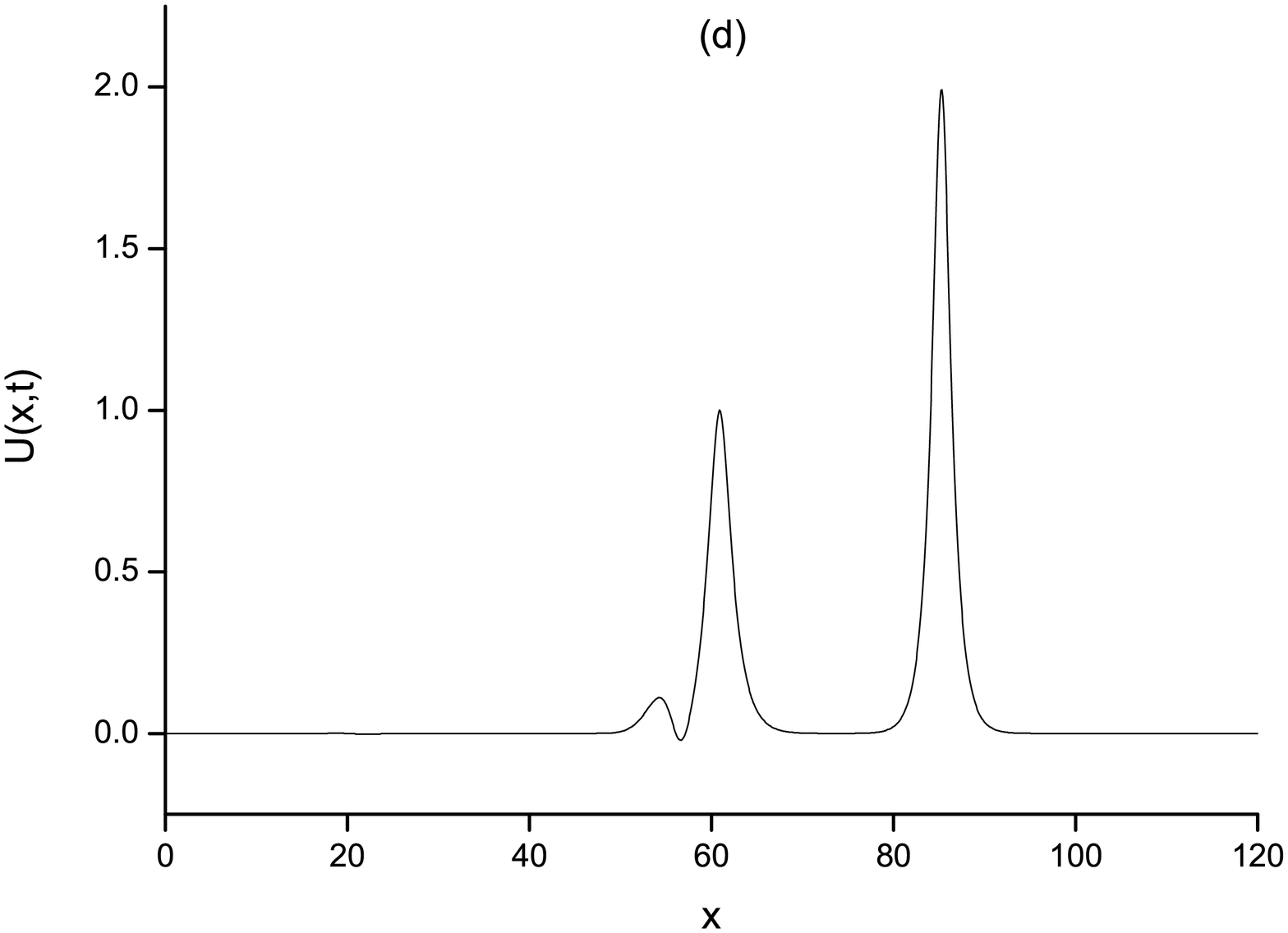}}
\caption{Interaction of two solitary waves at $p=3;$ $(a)t=0,$ $(b)t=3,$ $%
(c)t=5,$ $(d)t=6.$}
\label{40510}
\end{figure}
\begin{figure}[h]
\centering{\small \includegraphics[scale=.20]{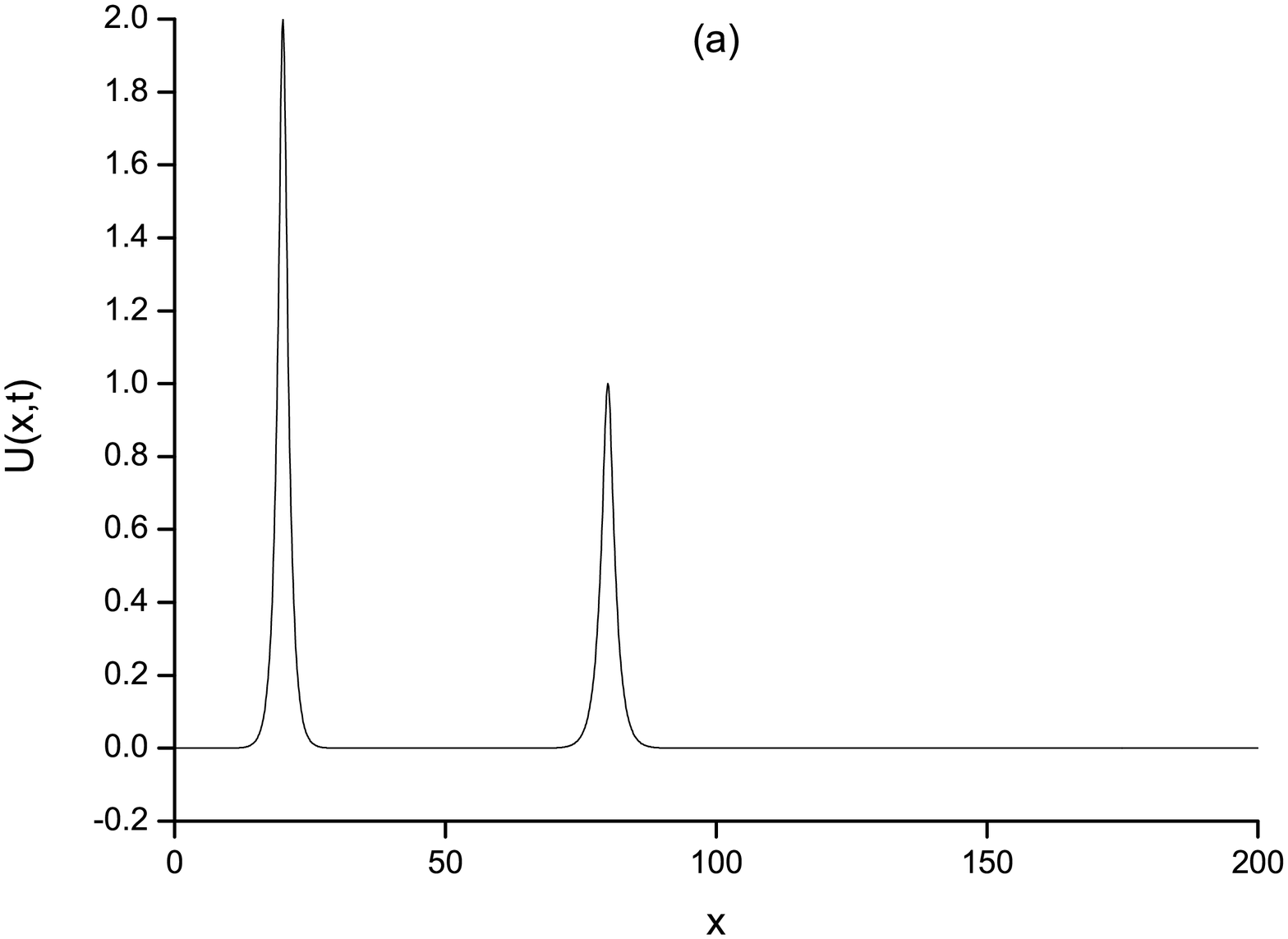} %
\includegraphics[scale=.20]{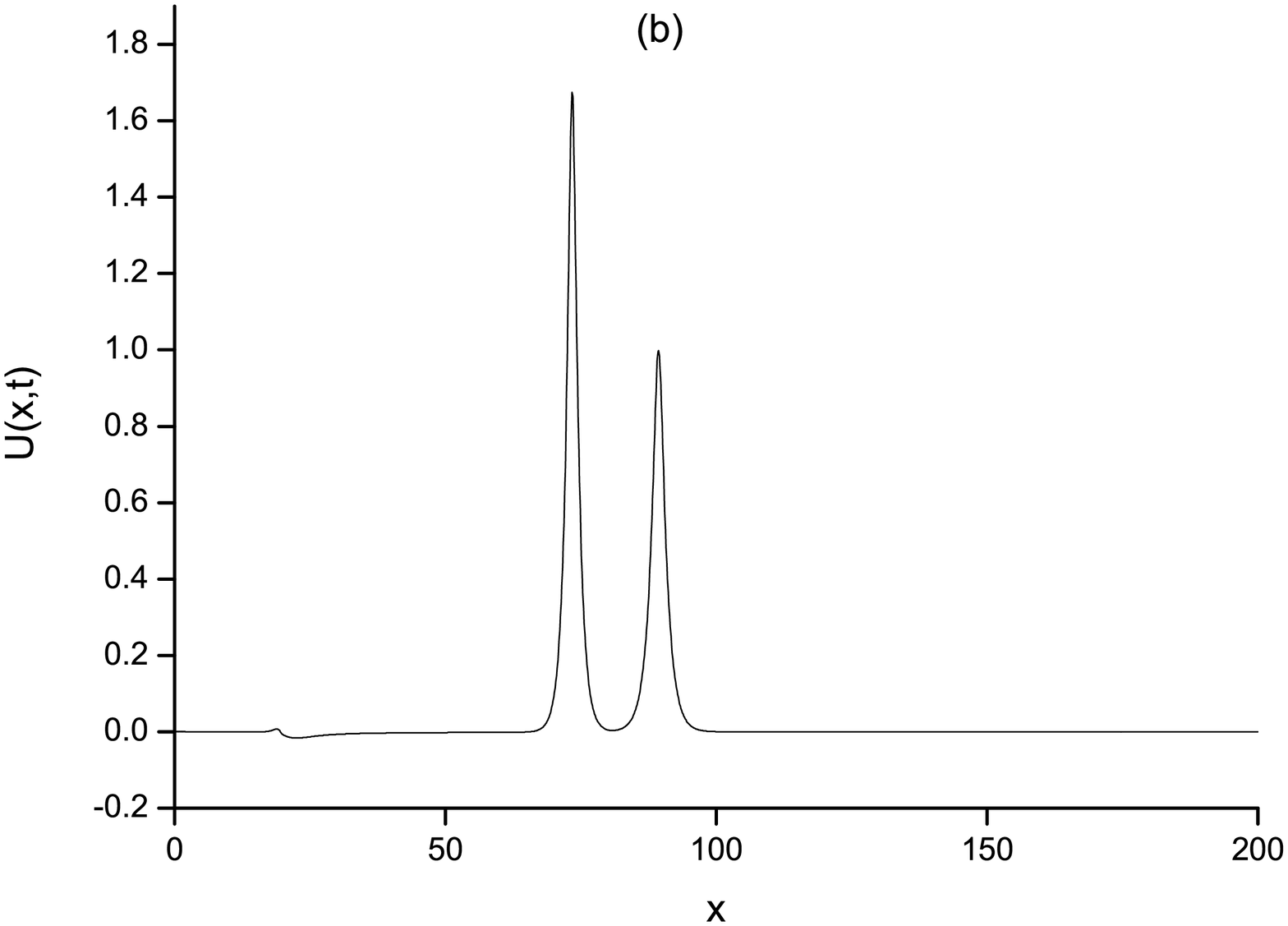}}
\par
{\small \includegraphics[scale=.20]{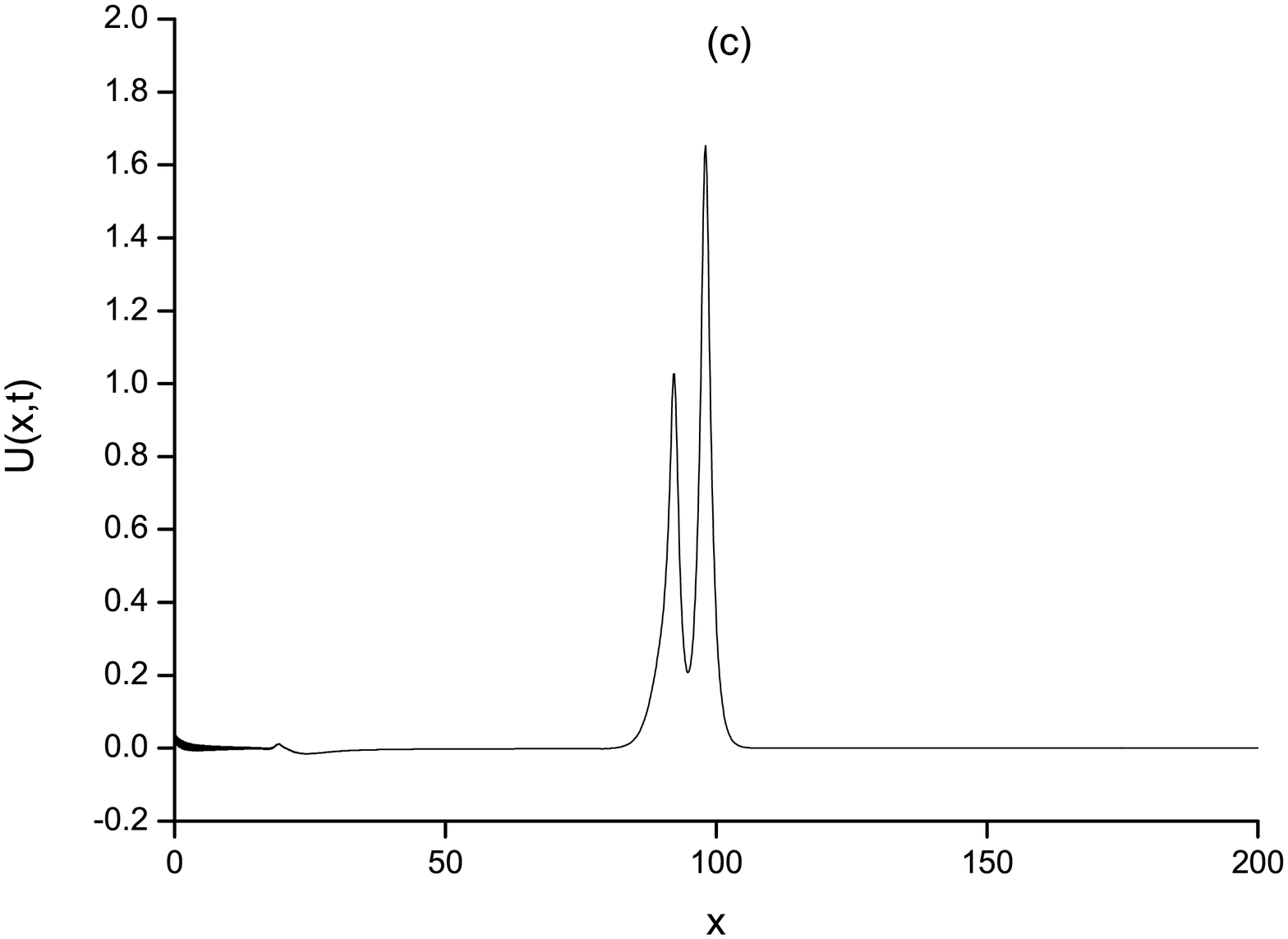} %
\includegraphics[scale=.20]{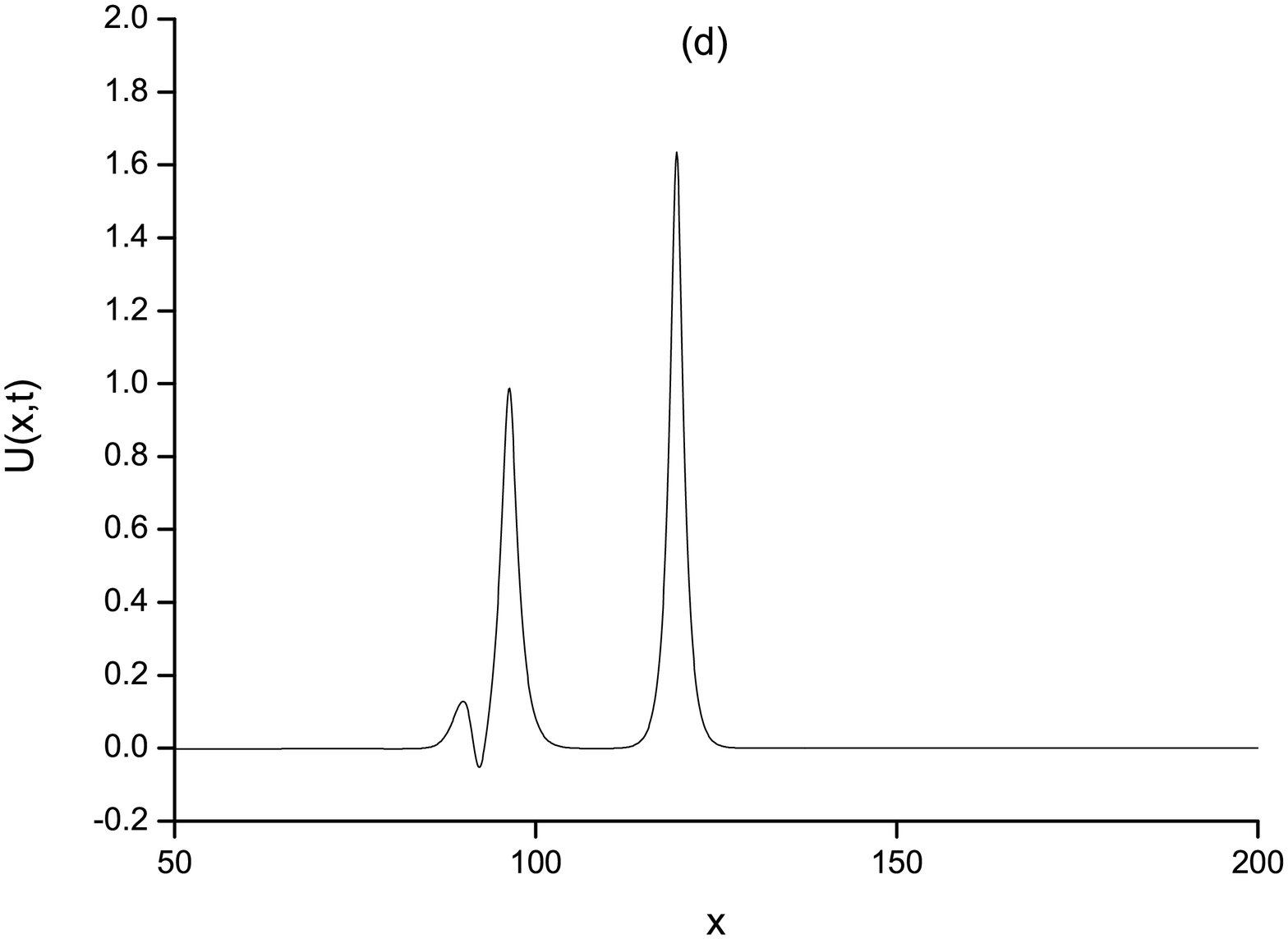}}
\caption{Interaction of two solitary waves at $p=4;$ $(a)t=0,$ $(b)t=2,$ $%
(c)t=4,$ $(d)t=6.$}
\label{4052}
\end{figure}

\subsection{The Maxwellian initial condition}

Finally, we have examined the evolution of an initial Maxwellian pulse into
solitary waves, arising as initial condition

\begin{equation}
u(x,0)=\exp (-(x-40)^{2}).
\end{equation}%
For this problem, the behavior of the solution depends on the value of $%
\mu
$ \cite{kaya2,roshan}. Therefore, we chose the values of $%
\mu
=0.1,$ $%
\mu
=0.05$ and $%
\mu
=0.025$ for $p=2,3,4$. The numerical computations are done up to $t=0.05$.
Calculated numerical invariants at different values of $t$ are shown in
Table $(\ref{4053})$ and it is seen that calculated invariant values are
satisfactorily constant. For $p=2$ and $%
\mu
=0.1$; the variation of invariants $I_{1},I_{2}$ and $I_{3}$ from initial
variants changes less than $1.02\times 10^{-3},4.48\times 10^{-3}$ and $%
8.69\times 10^{-3}$percent, respectively, and for $\mu =0.05;$ $2.01\times
10^{-3},8.35\times 10^{-3}$ and $17.92\times 10^{-3}$percent, respectively,
and for $\mu =0.025;$ $3.19\times 10^{-3},12.72\times 10^{-3}$ and $%
29.28\times 10^{-3}$percent, respectively, for $p=3$ and $%
\mu
=0.1$; the variation of invariants $I_{1},I_{2}$ and $I_{3}$ from initial
variants changes less than $8.60\times 10^{-3},2.77\times 10^{-2}$ and $%
4.75\times 10^{-1}$percent, respectively, and for $\mu =0.05;$ $17.45\times
10^{-3},54.32\times 10^{-3}$ and $6.62\times 10^{-1}$percent, respectively,
and for $\mu =0.025;$ $30.92\times 10^{-3},93.27\times 10^{-3}$ and $%
7.72\times 10^{-1}$percent, respectively, for $p=4$ and $%
\mu
=0.1$; the variation of invariants $I_{1},I_{2}$ and $I_{3}$ from initial
variants changes less than $38.82\times 10^{-3},1.09\times 10^{-3}$ and $1.76
$ percent, respectively, and for $\mu =0.05;$ $67.23\times
10^{-3},1.99\times 10^{-1}$ and $2.48$ percent, respectively, and for $\mu
=0.025;$ $1.04\times 10^{-1},3.31\times 10^{-1}$ and $3.0$ percent,
respectively. The development of the Maxwellian initial condition into
solitary waves is shown in Fig. $(\ref{4055})$.
\begin{table}[h]
\caption{Maxwellian initial condition for different values of $\protect\mu .$%
}
\label{4053}\vskip-1.cm
\par
\begin{center}
{\scriptsize
\begin{tabular}{ccccccccccc}
\hline\hline
$\mu $ & $t$ & \multicolumn{3}{c}{p=2} & \multicolumn{3}{c}{p=3} &
\multicolumn{3}{c}{p=4} \\ \hline
&  & $I_{1}$ & $I_{2}$ & $I_{3}$ & $I_{1}$ & $I_{2}$ & $I_{3}$ & $I_{1}$ & $%
I_{2}$ & $I_{3}$ \\ \hline
& 0.01 & 1.772481 & 1.378659 & 0.760911 & 1.772481 & 1.378655 & 0.760779 &
1.772422 & 1.378551 & 0.760310 \\
0.1 & 0.03 & 1.772475 & 1.378639 & 0.760890 & 1.772435 & 1.378538 & 0.759621
& 1.772189 & 1.378049 & 0.755954 \\
& 0.05 & 1.772463 & 1.378599 & 0.760847 & 1.772328 & 1.378278 & 0.757292 &
1.771793 & 1.377149 & 0.747477 \\
& 0.01 & 1.772480 & 1.315994 & 0.823572 & 1.772480 & 1.315988 & 0.823384 &
1.772382 & 1.315812 & 0.822659 \\
0.05 & 0.03 & 1.772469 & 1.315959 & 0.823526 & 1.772388 & 1.315772 & 0.821657
& 1.771994 & 1.314976 & 0.816132 \\
& 0.05 & 1.772445 & 1.315887 & 0.823429 & 1.772171 & 1.315282 & 0.818116 &
1.71289 & 1.313372 & 0.803134 \\
& 0.01 & 1.772480 & 1.284661 & 0.854901 & 1.772478 & 1.284651 & 0.854699 &
1.772340 & 1.284399 & 0.853806 \\
0.025 & 0.03 & 1.772462 & 1.284610 & 0.854823 & 1.772317 & 1.284299 &
0.852665 & 1.771760 & 1.283154 & 0.846013 \\
& 0.05 & 1.772424 & 1.284502 & 0.854658 & 1.771933 & 1.283467 & 0.848301 &
1.770623 & 1.280410 & 0.829240 \\ \hline\hline
\end{tabular}
}
\end{center}
\end{table}
\begin{figure}[h]
\centering{\small \includegraphics[scale=.20]{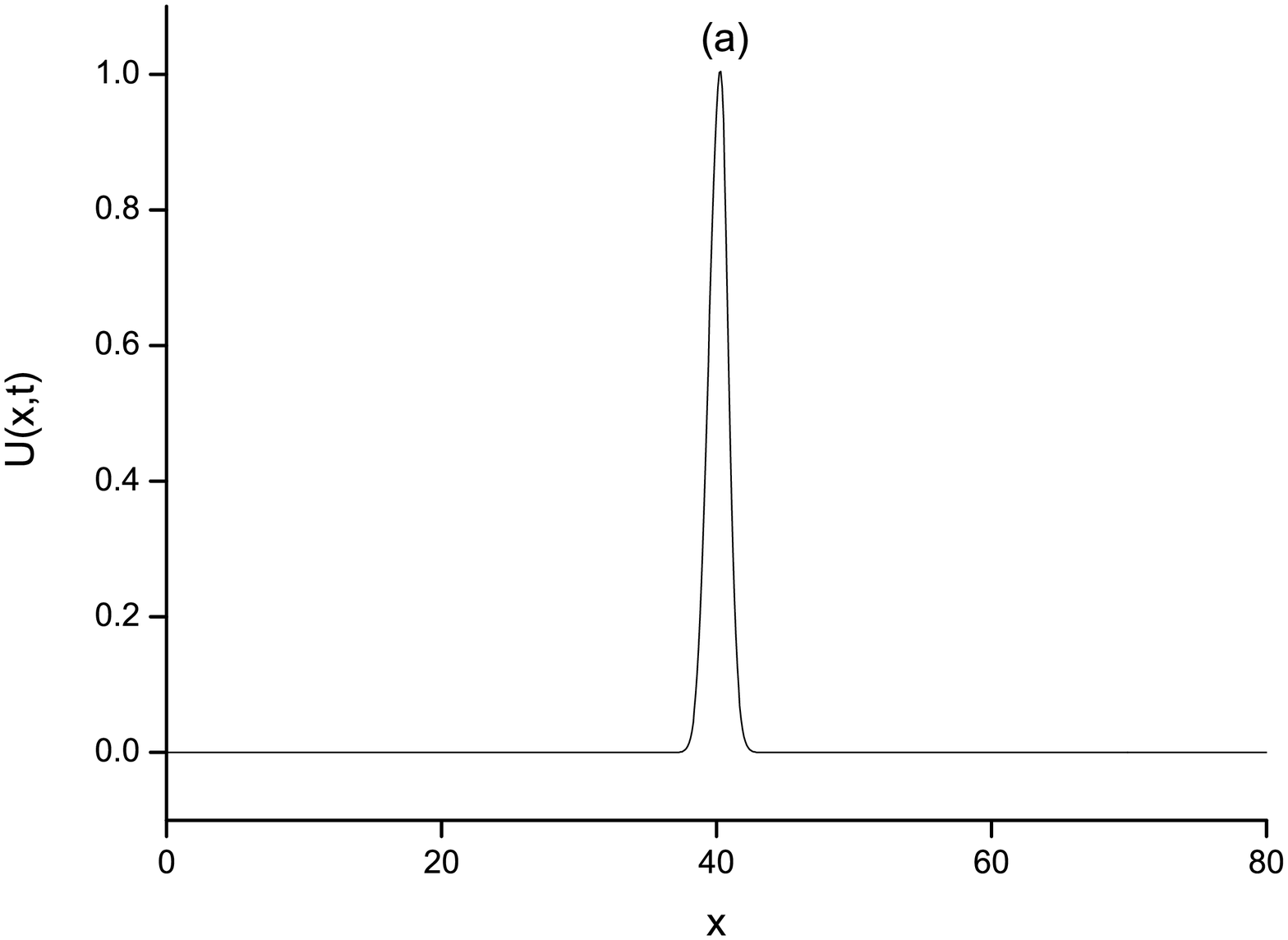} %
\includegraphics[scale=.20]{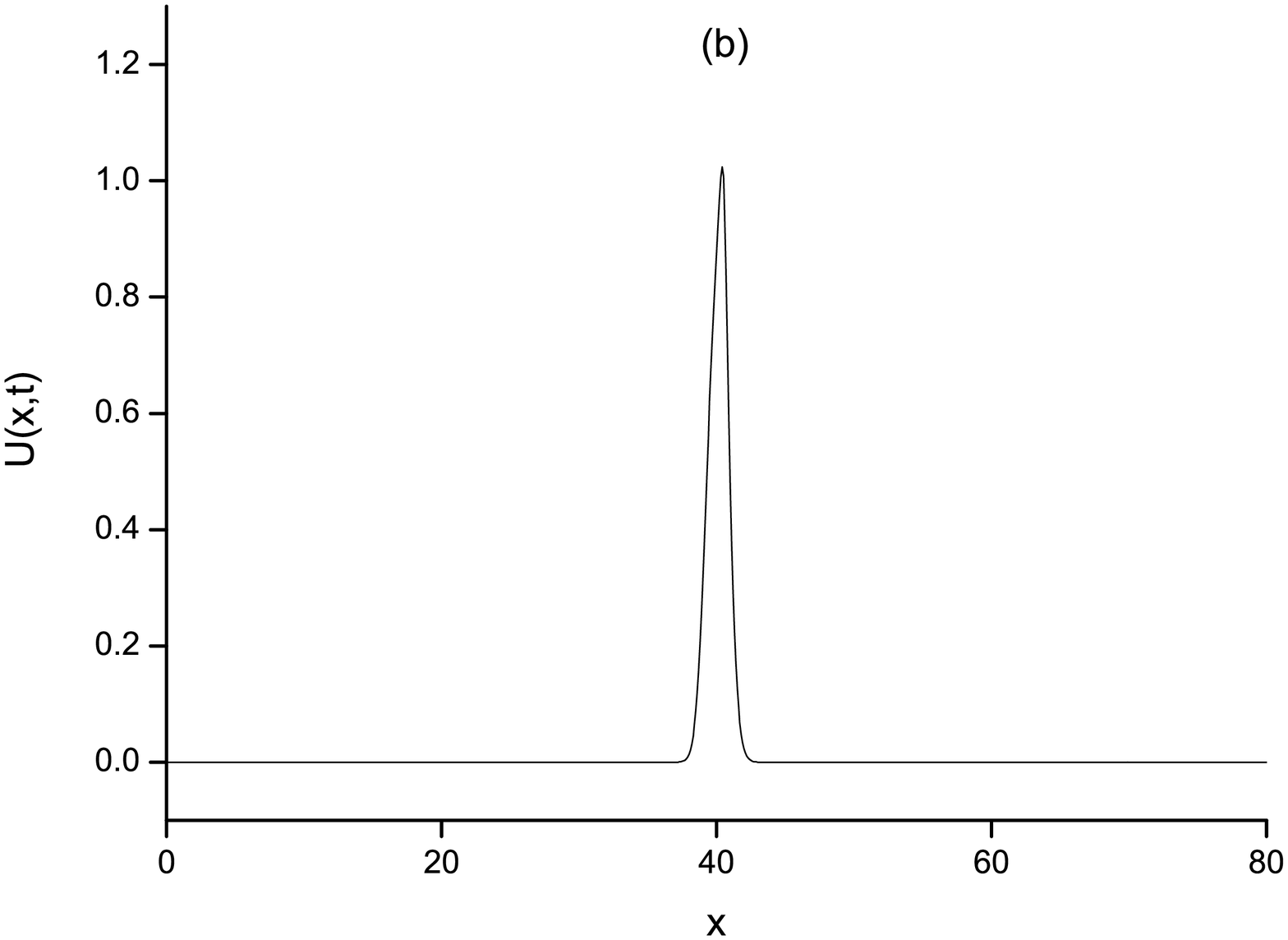}}
\par
{\small \includegraphics[scale=.20]{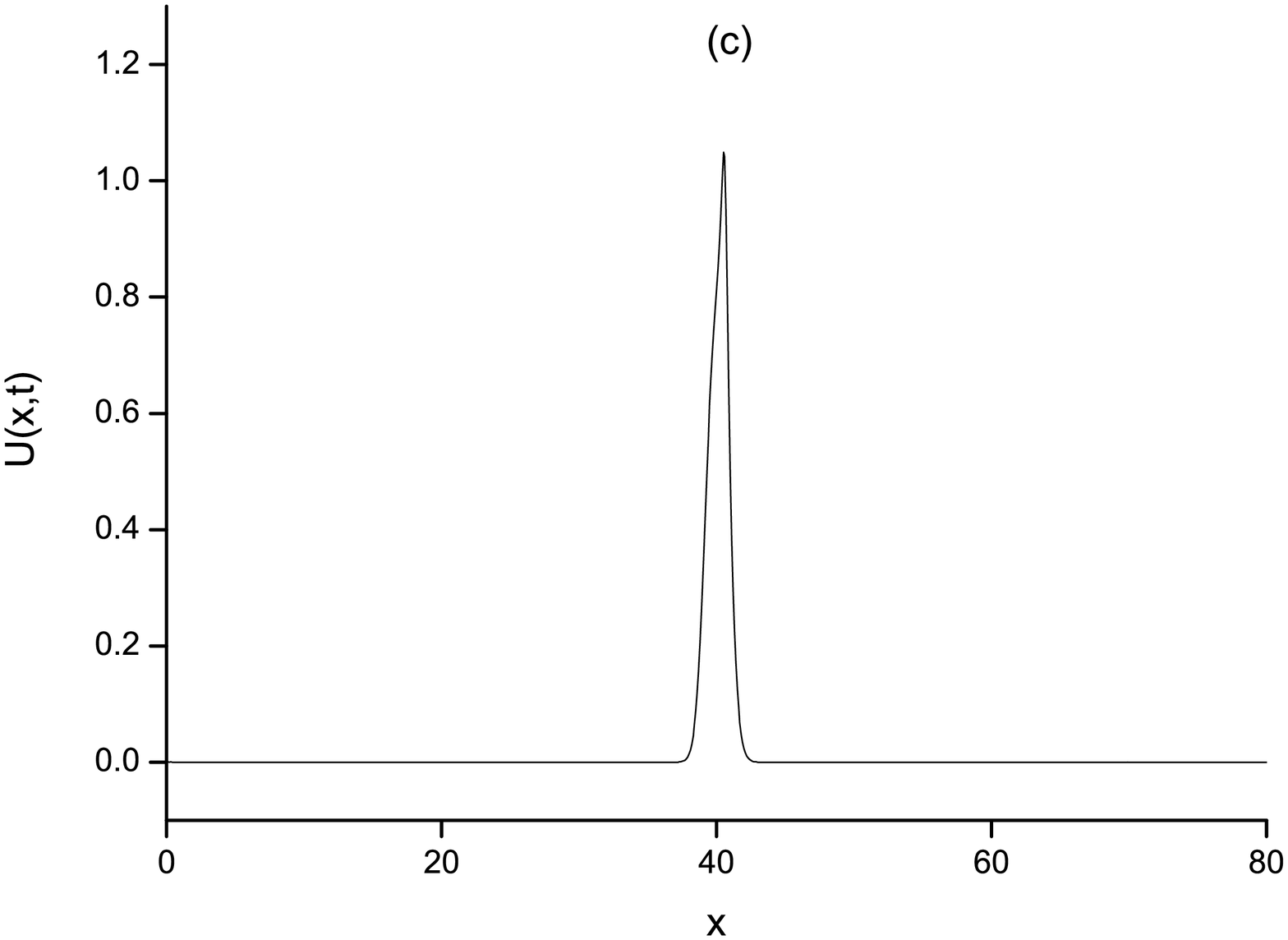}}
\caption{Maxwellian initial condition at $t=0.05$ $(a)$ $p=2,$ $\protect\mu %
=0.025$ $(b)$ $p=3,$ $\protect\mu =0.025$ $(c)$ $p=4,$ $\protect\mu =0.025$}
\label{4055}
\end{figure}

\section{Conclusion}

In this work, a numerical technique based on a Petrov-Galerkin method using
quadratic weight functions and cubic B-spline finite elements has been
proffered to get numerical solutions of GRLW equation. We experimented our
algorithm along with single solitary wave in which the exact solution is
known and broadened it to examine the interaction of two solitary waves and
Maxwellian initial condition where the exact solutions are unknown during
the interaction. Variational formulation and semi-discrete Galerkin scheme
of the equation are generated. Stability analysis have been done and the
linearized numerical scheme have been obtained unconditionally stable. The
accuracy of the method is investigated both $L_{2}$ and $L_{\infty }$ error
norms and the invariant quantities $I_{1}$, $I_{2}$ and $I_{3}$. The
obtained numerical results indicate that the error norms are satisfactorily
small and the conservation laws are marginally constant in all computer
program run. We can see that our numerical scheme for the equation is more
accurate than the other earlier schemes found in the literature. Therefore,
our numerical technique is suitable for\ getting numerical solutions of
partial differential equations.

\end{document}